\def\a{a}
\newcommand{\ydos}{\mathbf{y_{2}}}
\newcommand{\M}{{\mathbb M}}
\newcommand{\ds}{\displaystyle}
\newcommand{\supp}{\mathrm{supp}\;}
\newtheorem{remark}{\textbf{Remark}}[section]
\newtheorem{lemma}{\textbf{Lemma}}[section]
\newtheorem{theorem}{\textbf{Theorem}}[section]
\newtheorem{proposition}{\textbf{Proposition}}[section]
\newtheorem{problem}{\textbf{Problem}}[section]
\numberwithin{equation}{section}
\title[Approximation of deterministic mean field games with control-affine dynamics]{Approximation of deterministic mean field games under polynomial growth conditions on the data} 
\author[Justina Gianatti]{Justina Gianatti\jg}
\thanks{\jg CIFASIS-CONICET-UNR, Ocampo y Esmeralda, S2000EZP, Rosario, Argentina (gianatti@cifasis-conicet.gov.ar).}
\author[Francisco J. Silva]{Francisco J. Silva\fs} 
\thanks{\fs Institut de recherche XLIM-DMI, UMR-CNRS 7252, Facult\'e des Sciences et Techniques, 
Universit\'e de Limoges, 87060 Limoges, France (francisco.silva@unilim.fr).}
\author[Ahmad Zorkot]{Ahmad Zorkot\ha}
\thanks{\ha Institut de recherche XLIM-DMI, UMR-CNRS 7252, Facult\'e des Sciences et Techniques, 
Universit\'e de Limoges, 87060 Limoges, France (ahmad.zorkot@unilim.fr).}
\newcommand{\jg}{$^1$} \newcommand{\fs}{$^2$}\newcommand{\ha}{$^3$}
\def\dd{{\rm d}}
\newcommand{\ov}[1]{\overline{#1}}
\def\weight(#1,#2){c_{#1,#2}}
\def\B{\mathcal{B}}
\def\G{\mathcal{G}}
\def\I{\mathcal{I}}
\def\M{\mathcal{M}}
\def\P{\mathcal{P}}
\def\SS{\mathcal{S}}
\def\V{\mathcal{V}}
\def\eps{\varepsilon}
\def\supp{\mathop{\rm supp}}
\def\1B{{\bf  1}}
\newcommand{\NN}{\mathbb{N}}
\newcommand{\ZZ}{\mathbb{Z}}
\newcommand{\QQ}{\mathbb{Q}}
\newcommand{\RR}{\mathbb{R}}
\def\cR{\mathbb{R}}
\newcommand\be{\begin{equation}}
\newcommand\ee{\end{equation}}
\newcommand\ba{\begin{array}}
\newcommand\ea{\end{array}}
\newcommand{\bean}{\begin{eqnarray*}}
\newcommand{\eean}{\end{eqnarray*}}
\def\ds{\displaystyle}
\begin{document}
\begin{abstract} 
We consider a deterministic mean field games problem in which a typical agent solves an optimal control problem where the dynamics is affine with respect to the control and the cost functional has a growth which is polynomial with respect to the state variable. In this framework, we construct a mean field game problem in discrete time and finite state space that approximates equilibria of the original game. Two numerical examples, solved with the fictitious play method, are presented.
\end{abstract}
\maketitle

{\small
\noindent {\bf AMS-Subject Classification:} 91A16, 	49N80, 35Q89, 65M99, 91A26.  \\[0.5ex]
\noindent {\bf Keywords:} Deterministic mean field games, control-affine dynamics, Lagrangian equilibrium, approximation of equilibria, convergence results, numerical experiences.
}
\section{Introduction}

The theory of Mean Field Games (MFGs for short) problems has been introduced  in~\cite{MR2269875,MR2271747,MR2295621}
and, independently, in~\cite{HMC06} in order to describe the asymptotic behaviour of Nash equilibria of non-cooperative symmetric  differential games with a large number of indistinguishable  players, which, individually, have a minor influence on the game. The reader is referred to~\cite{MR3195844,MR3559742,MR3752669,MR3753660,MR4214773}, and the references therein, for an  overview on MFGs theory including their numerical approximation and applications in crowd motion, economics, and finance. Equilibria in MFGs are usually described in terms of a system of two equations, called MFG {\it system}; a Hamilton-Jacobi-Bellman equation, describing the optimal cost of a {\it typical player}, and a Fokker-Planck equation, describing the evolution of the players. 

This work deals with the numerical approximation of deterministic mean field games problems, i.e. when the underlying differential game is deterministic. In this setting, a relaxed, also called {\it Lagrangian}, formulation of equilibria involving a fixed point problem on the space of probability measures over the  trajectories of the players, has been introduced in \cite{MR3644590,CH17,cannarsa_capuani_2018}. We assume that the controlled dynamics of a  typical player in the MFG  has the form 
\be
\label{eq:state_equation_introduction}
\dot{\gamma}(t)= A(t,\gamma(t))+ B(t,\gamma(t))\alpha(t) \quad \text{for a.e. } t\in ]0,T[.
\ee
Here, $T>0$ denotes the time horizon,  $\gamma$ and $\alpha$, which take values in $\RR^{d}$ and in $\RR^{r}$, respectively, denote the state and the control of a typical player, and $A\colon [0,T]\times\RR^{d}\to\RR^{d}$ and $B\colon [0,T]\to\RR^{d\times r}$ are given functions.  When the typical player controls its velocity, i.e. $\dot{\gamma}(t)=\alpha(t)$ and MFG equilibria are characterized in terms of the MFG system, the reference~\cite{MR2928379} proposes a semi-discrete scheme which is shown to converge towards a solution to the MFG system. An implementable version of this approximation, including also a discretization of the space variable, has been introduced in~\cite{MR3148086} and it is shown to converge when the space dimension is equal to one. In the same framework, in~\cite{Carlini_Silva_Zorkot_2023}  the authors propose a Lagrange-Galerkin scheme for the continuity equation appearing in the MFG system and they show the convergence of a fully-discrete approximation in general state dimensions. By adopting the relaxed formulation, in~\cite{MR4030259} an approximation of the MFG problem written in terms of a discrete time and finite state MFG~\cite{MR2601334}, hereafter called {\it finite} MFG, is shown to converge in general dimensions. Moreover, under a monotonicity assumption on the interaction cost terms (see~\cite[Section 2.3]{MR2295621}), an adaptation of the fictitious play method (see~\cite{Brown_51,Robinsonn_51}) is shown to converge and hence allows to rigorously approximate a solution to the finite MFG. Finally, in the work~\cite{Gianatti-Silva-arxiv} the authors approximate MFGs by finite MFGs when the dynamics of the typical player takes the general form~\eqref{eq:state_equation_introduction}. The convergence is established in general dimensions, the key point being a careful discretization of the underlying optimal control problem. In particular, the results in~\cite{Gianatti-Silva-arxiv} cover the approximation of MFGs where the typical player controls its acceleration (see~\cite{MR4102464,MR4132067}).

In all the references above, the assumptions on the dependence of the cost functional with respect to the state do not allow a  polynomial growth. In particular, the cost cannot depend quadratically on the state, which is a typical case considered in the applications. Our aim in this work, which is complementary with~\cite{Gianatti-Silva-arxiv}, is to cover this case. The main difficulty coming from a polynomial growth of the cost is that the value function of a typical player is not globally Lipschitz with respect to the state. In particular, the optimal feedback law is only locally bounded and hence a careful analysis is needed in order to construct a scheme for the value function with good stability properties. Under our assumptions, which include the independence of $B(t,\gamma(t))$ on $\gamma(t)$, the optimal feedback controls have a linear growth with respect to the state. This property still allows us to construct an approximation of the MFG problem where the time marginals of the distributions of the states of the agents are supported on a compact set which is independent of the discretization steps. Next, the analysis in~\cite{Gianatti-Silva-arxiv} applies and yields the convergence of the scheme as the discretization parameters vanish. As in \cite{MR4030259,Gianatti-Silva-arxiv}, we adopt in this work the relaxed formulation of the MFG equilibrium, the main point being that, in the convergence study of our approximations, compactness properties for the solutions to the scheme are easier to establish.

The remainder of this article is organized as follows. Section~\ref{sec:preliminaries} fixes the notations and the assumptions in this work. It also recalls the notion of Lagrangian MFG equilibrium and provides an existence and uniqueness result. Section~\ref{sec:val_function_approximation} is central in this work as it builds the scheme used to approximate the value function of a typical player. We explain the relation between this scheme and a standard semi-Lagrangian scheme (see~\cite{MR3341715}) and we provide a convergence result. In Section~\ref{sec:approximation_mfg}, we describe the finite MFG that approximates the continuous one and we present existence, uniqueness, and  convergence results. Finally, in Section~\ref{sec:numerical_result} we consider two numerical examples where the cost functional depends polynomially on the state variable and the interactions terms are monotone, which allows us to approximate the solutions to the finite MFG problems by using the fictitious play method.\smallskip

{\bf Acknowledgements.} F. J. Silva and A. Zorkot where partially supported by l'Agence Nationale de la Recherche (ANR), project  ANR-22-CE40-0010, and by KAUST through the subaward agreement ORA-2021-CRG10-4674.6.
For the purpose of open access, the authors have applied a CC-BY public copyright licence to any Author Accepted Manuscript (AAM) version arising from this submission.

\section{Preliminaries}   
\label{sec:preliminaries}
In what follows, $|\cdot|$ will denote  the infinity norm in $\RR^{d}$, and, given $R>0$,  $B_{\infty}(0,R)$ (respectively $\ov{B}_{\infty}(0,R)$)  will denote the corresponding open (respectively closed) ball centered at $0$ and of radius $R$. We denote by $\P(\RR^{d})$ the set of probability measures over $\RR^{d}$ and, for $\mu\in\P(\RR^{d})$, we set $\supp(\mu)$ for its support. We define
$\P_{1}(\RR^d)$ as the subset of $\P(\RR^{d})$ consisting on probability measures with finite first order moment, i.e. 
\be 
\label{def:p_1}
\P_{1}(\RR^{d})=\bigg\{\mu\in\P(\RR^{d})\,\Big|\,\int_{\RR^{d}}|x|\dd\mu(x)<\infty\bigg\},
\ee
which is endowed with the Wasserstein distance 
\be
\label{def:wasserstein_uno}
d_{1}(\mu_{1},\mu_{2})=\inf_{\mu\in\Pi(\mu_{0},\mu_{1})}\int_{\RR^{d}\times\RR^{d}}|x-y|\dd\mu(x,y)\quad\text{for all }\mu_{1},\,\mu_{2}\in\P_{1}(\RR^{d}),
\ee
where $\Pi(\mu_{0},\mu_{1})$ denotes the subset of $\P_{1}(\RR^{d}\times\RR^{d})$ of probability measures with first and second marginals given by $\mu_{1}$ and $\mu_{2}$, respectively. Given $\nu\in\P(\RR^d)$ and a Borel function $\Psi:\RR^d\to\RR^q$ ($q\in\NN$), the {\it push-forward} measure $\Psi\sharp\nu$, defined on the $\sigma$-algebra of Borel sets $\B(\RR^{q})$, is defined by
\be
\Psi\sharp\nu(A)=\nu(\Psi^{-1}(A))\quad\text{for all }A\in \B(\RR^q), 
\ee

Let $T>0$. The mean field game problem that we will deal with in this article is defined in terms of $\ell\colon [0,T] \times \RR^{r}\times\RR^{d}\times\P_{1}(\RR^{d})\to\RR$, $g\colon\RR^{d}\times\P_{1}(\RR^{d})\to\RR^{d}$, $A\colon [0,T] \times \RR^{r}\to\RR^{d}$, $B\colon [0,T]\to\RR^{d\times r}$, and $m_0^{*}\in\P_{1}(\RR^{d})$. We will consider the following assumptions: 
\begin{enumerate}[label={\bf(H\arabic*)}]
\item 
\label{h:h1} 
The functions $\ell$ and  $g$ are continuous. Moreover, there exist $p\in]1,\infty[$, $C_{\ell,1}$, $C_{\ell,2}$, $C_{\ell,3}$, $C_{\ell,4}\in]0,\infty[$ and $C_{g,1}$, $C_{g,2}$, $C_{g,3}\in]0,\infty[$ such that, for every $(t,a,x,\mu)\in [0,T] \times \RR^{r}\times\RR^{d}\times\P_{1}(\RR^{d})$, 
\begin{align}
C_{\ell,1}|a|^{p}-C_{\ell,2}&\leq\ell(t,a,x,\mu)\leq C_{\ell,3}(1+|a|^{p}+|x|^{p}),
\label{h:h1_i}\\
-C_{g,1}&\leq g(x,\mu)\leq C_{g,2}(1+|x|^{p}),
\label{h:h1_ii}\\
|\ell(t,a,x,\mu)-\ell(t,a,y,\mu)|&\leq C_{\ell,4}\big(1+|x|^{p-1}+|y|^{p-1}+|a|^{p-1})|x-y|\quad\text{for all }y\in\RR^{d},
\label{h:h1_iii}\\
|g(x,\mu)-g(y,\mu)|&\leq C_{g,3}(1+|x|^{p-1}+|y|^{p-1})|x-y|\quad\text{for all }y\in\RR^{d}.
\label{h:h1_iv}
\end{align}
\item 
\label{h:h2}
The following hold:
\begin{enumerate}[label={\rm (\roman*)}]
\item 
\label{h:h2_i} The functions $A$  and $B$ are continuous. 
\item
\label{h:h2_ii}
There exists $L_{A}\in]0,\infty[$ such that, for every $(t,x)\in [0,T] \times \RR^{d}$, 
$$
|A(t,x)-A(t,y)|\leq L_{A}|x-y|\quad\text{for all }y\in\RR^{d}.
$$
\item 
\label{h:h2_iii}
We have $r\leq d$ and there exists $\{i_1, \hdots, i_{r}\}\subset \{1,\hdots, d\}$ such that, for all $t\in[0,T]$, the rows $i_1, \hdots, i_r$ of $B(t)$ are linearly independent.
\end{enumerate} 
\item There exists $C^{*}\in]0,\infty[$ such that $\supp(m_{0}^{*})\subset\ov{B}_{\infty}(0,C^{*})$. 
\label{h:h3} 
\item 
\label{h:h4}
The following hold:\smallskip
\begin{enumerate}[label={\rm (\roman*)}]
\item 
\label{h:h4_i}
The function $\ell$ can be written as  
$$
\hspace{0.7cm}\ell(t,a,x,\mu)=\ell_0(t,a,x) + f(t,x,\mu)  \quad \text{for all } t\in [0,T],\, a\in \RR^{r},\, x\in\RR^d,\, \mu\in \P_1(\RR^d),
$$
where $\ell_0\colon [0,T]\times \RR^{r}\times \RR^d \to \RR$ satisfies \ref{h:h1} and $f\colon[0,T]\times \RR^d\times\P_1(\RR^d)\to \RR$ is continuous, bounded, and there exists $L_{f}>0$ such that, for all $(t,\mu)\in [0,T]\times \P_1(\RR^d)$,
$$
\hspace{0.4cm}|f(t,x,\mu) -f(t,y,\mu)|\leq L_{f} |x-y| \quad \text{for all }x,y\in \RR^d. 
$$
\item 
\label{h:h4_ii}
The functions $f(t,\cdot, \cdot)$, for all $t \in [0,T]$, and $g$  are {\it monotone}, i.e. for $\Psi=f(t,\cdot,\cdot),\,g$ it holds that 
\be
\label{monotonia_Phi}
\ds \int_{\RR^d}\big(\Phi(x,\mu_1)-\Phi(x,\mu_2)\big)\dd(\mu_1-\mu_2)(x)  \geq 0 \quad \text{for all $\mu_1$, $\mu_2\in \P_1(\RR^d)$}.
\ee
\end{enumerate}
\end{enumerate}

Assumptions~\ref{h:h1},~\ref{h:h2}, and~\ref{h:h3} ensure that both the MFG problem defined in Problem~\ref{mfg_problem} below, and its approximation, introduced in Section~\ref{sec:approximation_mfg}, admit at least one solution. Assumption~\ref{h:h4} plays an important role in the uniqueness of the equilibrium for both the MFG and its approximation and also in the proof of the convergence of a numerical method to solve the finite MFG problem (see~\cite{Gianatti-Silva-arxiv}). 

Note that \ref{h:h2} implies that 
\be
\label{eq:cota_A}
|A(t,x)|\leq C_{A}(1+|x|)  \quad \text{for all } (t,x) \in [0,T] \times \RR^d,
\ee
where 
$C_{A}=\max\{\max_{t\in [0,T]}|A(t, 0 )|,L_{A}\}$. 
In what follows, setting $|B(t)|$ for the matrix norm of $B(t)$ induced by the infinity-norm in $\RR^{r}$, we set $C_{B}=\sup_{t\in[0,T]}|B(t)|$,  which is finite by~\ref{h:h2}\ref{h:h2_i}.

Let us describe the MFG problem considered in this article. Given $x\in\cR^d$ and $m\in C\left([0,T];\mathcal{P}_1(\cR^d)\right)$, a typical player positioned at $x$ at time $t=0$ solves an optimal control problem of the form 
\be
\label{oc_problem}
\left\{\ba{l}
\ds \inf  \; \int_{0}^{T} \ell\left(s, \alpha(s),\gamma(s), m(s) \right)\dd s + g(\gamma(T),m(T)) \\[10pt]
\mbox{s.t. } \hspace{0.3cm} \dot{\gamma}(s)= A(s,\gamma(s))+ B(s)\alpha(s) \quad \text{for a.e. } s \in ]0,T[,\\[6pt]
\hspace{0.9cm} \gamma(0)=x, \\[6pt]
\hspace{0.9cm} \gamma\in W^{1,p}([0,T];\cR^d), \; \alpha\in L^{p}([0,T];\cR^r).
\ea\right. \tag{$OC_{x,m}$}
\ee

Note that assumption~\ref{h:h1} states that the cost functional in~\eqref{oc_problem} has a polynomial growth with respect to the state and control variables. In particular, our conditions on the cost functional are more general than those in~\cite{Gianatti-Silva-arxiv}. On the other hand, regarding the dynamics in~\eqref{oc_problem}, in~\cite{Gianatti-Silva-arxiv} the matrix $B$ can also depend on the state variable. 
 
Let us endow $\Gamma:=C\left([0,T];\cR^d\right)$ with the supremum norm $\|\cdot \|_{\infty}$ and, for all $t\in[0,T]$, define  $e_t\colon \Gamma\to\cR^d$ by $e_t(\gamma)=\gamma(t)$ for all $\gamma\in\Gamma$. Let us also set
$$
\P_{m_0^{*}}(\Gamma)=\{\xi\in\mathcal{P}_1\left(\Gamma\right)\,|\,e_0\sharp\xi = m_0^{*}\}.$$
The notion of equilibria that we consider is the {\it Lagrangian {\rm MFG} equilibria}, defined as a solution to the following problem:

\begin{problem}
\label{mfg_problem} 
Find $\xi^*\in \mathcal{P}_{m_0^{*}}\left(\Gamma\right)$ such that $[0,T] \ni t\mapsto e_t\sharp \xi^* \in \P_1(\RR^d)$ belongs to $C\left([0,T];\mathcal{P}_1(\cR^d)\right)$ and  for $\xi^*$-a.e. $\gamma^*\in\Gamma$ there exists $\alpha^*\in L^p([0,T];\cR^r)$ such that $(\gamma^*,\alpha^*)$ solves \eqref{oc_problem} with $x=\gamma^*(0)$ and $m(t)=e_t\sharp \xi^*$ for all $t\in[0,T]$.
\end{problem}

We have the following result.

\begin{theorem}
\label{th:mfg_existence_uniqueness}  
Assume that~\ref{h:h1},~\ref{h:h2}, and~\ref{h:h3} hold. Then Problem~\ref{mfg_problem} admits at least one solution. Moreover, if~\ref{h:h4} holds and for every $m\in C([0,T];\P_{1}(\RR^d))$ and $m_{0}^{*}$-a.e. $x\in\RR^d$ problem~\eqref{oc_problem} admits a unique solution, then the MFG equilibrium is unique.
\end{theorem}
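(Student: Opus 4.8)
The plan is to derive existence from the Kakutani--Fan--Glicksberg fixed point theorem applied to the relaxed (Lagrangian) formulation, and uniqueness from the Lasry--Lions monotonicity argument combined with the assumed uniqueness of the underlying optimal control problem.

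\emph{Existence: a priori bounds and minimizers.} The decisive preliminary step is a family of bounds that are uniform with respect to the coupling. Fix $m\in C([0,T];\P_1(\RR^d))$ and $x\in\ov{B}_\infty(0,C^*)$. Comparing an admissible pair of~\eqref{oc_problem} with the one produced by $\alpha\equiv 0$ (whose trajectory is controlled, through~\eqref{eq:cota_A} and Gr\"onwall's lemma, by a constant depending only on $C^*$, $C_A$ and $T$) and using the upper bounds in~\ref{h:h1}, the value of~\eqref{oc_problem} is bounded above by a constant depending only on the data. The coercivity bounds $\ell\ge C_{\ell,1}|a|^p-C_{\ell,2}$ and $g\ge-C_{g,1}$ then force $\|\alpha\|_{L^p}\le M$ for every near-optimal pair, with $M$ uniform; inserting this into the state equation and using~\eqref{eq:cota_A} and Gr\"onwall once more gives $\|\gamma\|_\infty\le R$ and $\|\dot\gamma\|_{L^p}\le M'$, again uniformly. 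Hence all near-optimal trajectories lie in a fixed compact, convex, equi-H\"older (exponent $1-\tfrac1p$) set $\cC\subset\Gamma$ of curves issued from $\ov{B}_\infty(0,C^*)$. For each $(x,m)$ the direct method then yields a nonempty closed set $\mathrm{Opt}(x,m)\subset\cC$ of minimizers of~\eqref{oc_problem}: a minimizing sequence is bounded as above, so up to subsequences its controls converge weakly in $L^p$ and its trajectories uniformly, the affine-in-$\alpha$ dynamics is stable under this convergence (here $A(t,\cdot)$ is continuous), and the cost is lower semicontinuous.

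\emph{Existence: the fixed point.} Put $\cK=\{\xi\in\P_{m_0^{*}}(\Gamma):\supp\xi\subset\cC\}$, a convex set, compact for the narrow topology (equivalently for $d_1$, since the supports lie in the fixed compact $\cC$); note $t\mapsto e_t\sharp\xi$ then belongs to $C([0,T];\P_1(\RR^d))$ with an equicontinuity modulus inherited from $\cC$. Define $\Lambda\colon\cK\to 2^{\cK}$ by letting $\eta\in\Lambda(\xi)$ iff $e_0\sharp\eta=m_0^{*}$ and, for $\eta$-a.e.\ $\gamma$, $\gamma\in\mathrm{Opt}\big(\gamma(0),(e_\cdot\sharp\xi)\big)$; a measurable selection of optimal trajectories pushed forward by $m_0^{*}$ shows $\Lambda(\xi)\neq\emptyset$, and $\Lambda(\xi)$ is convex and closed. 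The main obstacle is the upper semicontinuity of $\Lambda$: given $\xi_n\to\xi$ in $\cK$ and $\eta_n\in\Lambda(\xi_n)$ with $\eta_n\to\eta$, one must show $\eta\in\Lambda(\xi)$, i.e.\ that optimality is stable as the coupling $m_n:=e_\cdot\sharp\xi_n$ moves towards $m:=e_\cdot\sharp\xi$ in $C([0,T];\P_1(\RR^d))$. The delicate point brought by the polynomial growth is that the value function $x\mapsto u^{m}(x)$ of~\eqref{oc_problem} is only locally Lipschitz; one copes with this by exploiting that \emph{all} relevant curves live in the fixed compact $\cC$, on which $\ell$ and $g$ depend continuously on $m$ uniformly. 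This gives $u^{m_n}\to u^{m}$ locally uniformly together with the lower semicontinuity, uniform in $n$, of $\gamma\mapsto J^{m_n}(\gamma)$ on $\cC$ needed to pass to the limit, where $J^{m}(\gamma)$ is the cost of $\gamma$ evaluated at its associated control, which is unique since $B(t)$ has full column rank by~\ref{h:h2}\ref{h:h2_iii}. Integrating the identity $J^{m_n}(\gamma)=u^{m_n}(\gamma(0))$, valid $\eta_n$-a.e., and passing to the limit gives $\int_{\cC}\big(J^{m}(\gamma)-u^{m}(\gamma(0))\big)\dd\eta(\gamma)=0$, so, the integrand being nonnegative, $\eta$-a.e.\ $\gamma$ is optimal and $\eta\in\Lambda(\xi)$. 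Kakutani--Fan--Glicksberg then provides a fixed point of $\Lambda$, which solves Problem~\ref{mfg_problem}. (Alternatively, existence also follows by letting the discretization parameters vanish in the finite MFG of Section~\ref{sec:approximation_mfg}.)

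\emph{Uniqueness.} Assume~\ref{h:h4} and that~\eqref{oc_problem} has a unique solution $(\gamma^{m,x},\alpha^{m,x})$ for every $m$ and $m_0^{*}$-a.e.\ $x$. Disintegrating any equilibrium $\xi$, with flow $m=e_\cdot\sharp\xi$, with respect to $e_0$ and using uniqueness, $\xi_x=\delta_{\gamma^{m,x}}$ for $m_0^{*}$-a.e.\ $x$; hence $\xi=(x\mapsto\gamma^{m,x})\sharp m_0^{*}$ and $m(t)=(x\mapsto\gamma^{m,x}(t))\sharp m_0^{*}$. Let $\xi_1,\xi_2$ be equilibria with flows $m_1,m_2$. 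Writing $\ell=\ell_0+f$ as in~\ref{h:h4}\ref{h:h4_i}, testing the optimality of $\gamma^{m_1,x}$ for $m_1$ against the admissible competitor $\gamma^{m_2,x}$ (same initial point), then symmetrically, and adding the two inequalities, the $\ell_0$-costs cancel and one obtains
\begin{align*}
&\int_0^T\!\big(f(s,\gamma^{m_1,x}(s),m_1(s))+f(s,\gamma^{m_2,x}(s),m_2(s))-f(s,\gamma^{m_2,x}(s),m_1(s))-f(s,\gamma^{m_1,x}(s),m_2(s))\big)\dd s\\
&\quad+g(\gamma^{m_1,x}(T),m_1(T))+g(\gamma^{m_2,x}(T),m_2(T))-g(\gamma^{m_2,x}(T),m_1(T))-g(\gamma^{m_1,x}(T),m_2(T))\ \le\ 0.
\end{align*}
Integrating in $x$ against $m_0^{*}$ and using $\gamma^{m_i,\cdot}(s)\sharp m_0^{*}=m_i(s)$, the left-hand side becomes
\[
\int_0^T\!\!\int_{\RR^d}\!\big(f(s,y,m_1(s))-f(s,y,m_2(s))\big)\dd(m_1(s)-m_2(s))(y)\,\dd s+\int_{\RR^d}\!\big(g(y,m_1(T))-g(y,m_2(T))\big)\dd(m_1(T)-m_2(T))(y),
\]
which by the monotonicity~\ref{h:h4}\ref{h:h4_ii} is a sum of two nonnegative terms; hence both vanish, and the displayed inequality is an equality for $m_0^{*}$-a.e.\ $x$. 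Being the sum of two ``$\le$'' inequalities, each of them is then an equality; the first says that $(\gamma^{m_2,x},\alpha^{m_2,x})$ is optimal for~\eqref{oc_problem} with coupling $m_1$, so by the uniqueness hypothesis $\gamma^{m_2,x}=\gamma^{m_1,x}$ for $m_0^{*}$-a.e.\ $x$. Therefore $m_1\equiv m_2$ and, by the disintegration identity, $\xi_1=\xi_2$.
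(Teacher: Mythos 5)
Your argument is correct in substance, but the existence part follows a genuinely different route from the paper's. The paper obtains existence of a Lagrangian equilibrium as a by-product of its convergence analysis: it is produced in Theorem~\ref{main_result} as a narrow limit of solutions to the finite MFGs of Section~\ref{sec:approximation_mfg} (whose own existence comes from Brouwer's theorem applied to $\mathbf{br}$), so no fixed-point argument in $\P(\Gamma)$ is ever carried out at the continuous level. You instead run Kakutani--Fan--Glicksberg directly on the relaxed formulation, in the spirit of \cite{MR3644590,cannarsa_capuani_2018}; this is self-contained and does not need the discretization machinery, but the price is that you must prove the closed graph of the best-response correspondence yourself, which is exactly where the polynomial growth bites since $v[m]$ is only locally Lipschitz. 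Your way out --- confining all near-optimal trajectories to a fixed compact, convex, equi-H\"older set $\cC$ by combining the coercivity in \eqref{h:h1_i}, the lower bound \eqref{h:h1_ii}, estimate \eqref{eq:cota_A} and Gr\"onwall --- is the same mechanism that underlies Lemma~\ref{the_big_compact} and Proposition~\ref{prop:value_function}, so the two proofs ultimately rest on identical a priori estimates packaged differently. Two caveats, both at the same level of detail as the paper rather than genuine gaps: the weak-$L^{p}$ lower semicontinuity of the cost along minimizing sequences and in your closed-graph step tacitly uses convexity of $\ell(t,\cdot,x,\mu)$, which is not listed in \ref{h:h1} but is equally implicit in the paper's appeal to \cite[Theorem~3.23]{dacorogna89}; and the locally uniform convergence of the value functions together with the measurable-selection step showing $\Lambda(\xi)\neq\emptyset$ are asserted rather than proved, though both are standard under \ref{h:h1}--\ref{h:h3}. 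Your uniqueness proof coincides with the one the paper invokes by reference to \cite[Theorem~2.2]{Gianatti-Silva-arxiv}: disintegration plus uniqueness of the optimal trajectory to write $m_i(t)$ as a push-forward of $m_0^{*}$, cross-testing of optimality, cancellation of the $\ell_0$ terms, and the monotonicity of $f$ and $g$ forcing each cross-inequality to be an equality.
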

\begin{proof}
The existence of at least one solution follows from Theorem~\ref{main_result} below while the uniqueness result can be shown by arguing exactly as in the proof of~\cite[Theorem 2.2]{Gianatti-Silva-arxiv}.
\end{proof}
\section{The value function of a typical player and its approximation}
\label{sec:val_function_approximation}
 
In this  section we fix $p\in]1,\infty[$. For every $(t,x)\in[0,T]\times\RR^{d}$ and $\alpha\in L^{p}([t,T];\RR^{r})$, note that~\ref{h:h2} implies that 
\be
\label{eq:state_equation}
\dot{\gamma}(s)= A(s,\gamma(s))+ B(s)\alpha(s) \quad \text{for a.e. } s\in ]t,T[,\quad \gamma(t)=x.
\ee
admits a unique solution $\gamma^{t,x,\alpha}\in W^{1,p}([0,T];\RR^{d})$. Given $m\in C([0,T];\P_{1}(\RR^{d}))$, set 
$$
J^{t,x}[m](\alpha)= \int_{t}^{T} \ell(s,\alpha(s),\gamma^{t,x,\alpha}(s),m(s))\dd s + g(\gamma(T) ,m(T))\quad\text{for all }\alpha\in L^{p}([t,T];\RR^{r}).
$$ 

The value function $v[m]\colon[0,T]\times\RR^{d}\to\RR$ is defined as 
\be 
\label{eq:value_function}
v[m](t,x)=\inf\big\{J^{t,x}[m](\alpha)\,|\,\alpha\in L^{p}([t,T];\RR^{r})\big\}\quad\text{for all }(t,x)\in[0,T]\times \RR^{d}.
\ee
\begin{proposition}
\label{prop:value_function}
Assume that~\ref{h:h1} and~\ref{h:h2} hold, let $m\in C([0,T];\P_{1}(\RR^d))$, and let $(t,x)\in [0,T]\times\RR^d$. Then there exists $\alpha^{*}\in L^{p}\big([t,T];\RR^{d}\big)$ such that $v[m](t,x)=J^{t,x}[m](\alpha^{*})$. Moreover, the following hold:  
\begin{enumerate}[label={\rm(\roman*)}]
\item 
\label{prop:value_function_i}
The exists  $C_{\text{{\rm Lip}}}>0$, independent of $(t,x,m)$, such that 
\be 
\label{eq:value_function_Lipschitz}
\big|v[m](t,x)-v[m](t,y)\big|\leq C_{\text{{\rm Lip}}}(1+|x|^{p-1}+|y|^{p-1})|x-y|\quad\text{for all }t\in [0,T],\,y\in\RR^d.
\ee
\item
\label{prop:value_function_ii}
There exists $C_{\text{{\rm b}}}>0$, independent of $(t,x,m)$, such that 
\be 
\label{eq:cota_control}
|\alpha^{*}(s)|\leq C_{\text{{\rm b}}}\big(1+|\gamma^{t,x,\alpha^{*}}(s)|\big)\quad\text{for a.e. }s\in [t,T].
\ee
\end{enumerate}
\end{proposition}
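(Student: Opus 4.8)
The plan is to prove existence of $\alpha^*$ by the direct method, reading off along the way a priori bounds on minimizers in terms of $|x|$, after which items (i) and (ii) reduce to Gronwall estimates combined with the structural inequalities in~\ref{h:h1}. For existence, I would first test with the admissible control $\alpha\equiv0$: its trajectory obeys $|\gamma^{t,x,0}(s)|\le C(1+|x|)$ by~\eqref{eq:cota_A} and Gronwall, so the upper bounds~\eqref{h:h1_i}--\eqref{h:h1_ii} give $v[m](t,x)\le C(1+|x|^p)$ with $C$ independent of $(t,x,m)$. Then, for a minimizing sequence $(\alpha_n)_n$, the coercivity $\ell\ge C_{\ell,1}|a|^p-C_{\ell,2}$ together with $g\ge-C_{g,1}$ forces $\|\alpha_n\|_{L^p([t,T])}^p\le C(1+|x|^p)$, and~\eqref{eq:cota_A} with Gronwall then bound $\|\gamma^{t,x,\alpha_n}\|_\infty$ and $\|\dot\gamma^{t,x,\alpha_n}\|_{L^p}$ by $C(1+|x|)$. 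Extracting a subsequence with $\alpha_n\rightharpoonup\alpha^*$ in $L^p$, and using that $A$ is Lipschitz with linear growth while $B$ is continuous and bounded, I would pass to the limit in the integral form of~\eqref{eq:state_equation} to obtain $\gamma^{t,x,\alpha_n}\to\gamma^{t,x,\alpha^*}$ uniformly; the terminal cost then converges, the running cost is weakly lower semicontinuous by standard arguments, and hence $v[m](t,x)=J^{t,x}[m](\alpha^*)$. The same estimates hold for every minimizer, giving $\|\alpha^*\|_{L^p([t,T])}\le C_\star(1+|x|)$ and $|\gamma^{t,x,\alpha^*}(s)|\le C_\star(1+|x|)$ on $[t,T]$ with $C_\star$ independent of $(t,x,m)$, which is what is needed below.

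For (i), I would fix $t$, $m$ and $x,y\in\RR^d$, take $\alpha^*$ optimal for $(t,y)$, and use it as a competitor at $x$. Writing $\gamma_x=\gamma^{t,x,\alpha^*}$ and $\gamma_y=\gamma^{t,y,\alpha^*}$, Gronwall and~\ref{h:h2}\ref{h:h2_ii} give $|\gamma_x(s)-\gamma_y(s)|\le e^{L_AT}|x-y|$, while the a priori bounds give $|\gamma_x(s)|,|\gamma_y(s)|\le C(1+|x|+|y|)$ and $\|\alpha^*\|_{L^p}\le C(1+|y|)$. Since $v[m](t,x)-v[m](t,y)\le J^{t,x}[m](\alpha^*)-J^{t,y}[m](\alpha^*)$, I would estimate the right-hand side using~\eqref{h:h1_iii} and~\eqref{h:h1_iv}, treating the contribution of $\int_t^T|\alpha^*(s)|^{p-1}|\gamma_x(s)-\gamma_y(s)|\,\dd s$ with H\"older's inequality and the $L^p$ bound on $\alpha^*$; this yields $v[m](t,x)-v[m](t,y)\le C_{\text{Lip}}(1+|x|^{p-1}+|y|^{p-1})|x-y|$, and exchanging $x$ and $y$ gives~\eqref{eq:value_function_Lipschitz}.

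For (ii), I would fix an optimal pair $(\gamma^*,\alpha^*)$ for $(t,x)$, a point $\sigma\in[t,T)$, and a small $h>0$, and set $y=\gamma^*(\sigma)$, $J_h=\int_\sigma^{\sigma+h}|\alpha^*(r)|^p\,\dd r$, so that $m_h:=\int_\sigma^{\sigma+h}|\alpha^*(r)|\,\dd r\le h^{1-1/p}J_h^{1/p}$. By the principle of optimality the restriction of $(\gamma^*,\alpha^*)$ to $[\sigma,T]$ is optimal for $(\sigma,y)$, so $v[m](\sigma,y)=\int_\sigma^{\sigma+h}\ell(r,\alpha^*(r),\gamma^*(r),m(r))\,\dd r+v[m](\sigma+h,\gamma^*(\sigma+h))$; concatenating the zero control on $[\sigma,\sigma+h]$ with a control optimal for $(\sigma+h,\gamma^{\sigma,y,0}(\sigma+h))$ gives in addition $v[m](\sigma,y)\le\int_\sigma^{\sigma+h}\ell(r,0,\gamma^{\sigma,y,0}(r),m(r))\,\dd r+v[m](\sigma+h,\gamma^{\sigma,y,0}(\sigma+h))$. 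Subtracting these leads to
\[
\int_\sigma^{\sigma+h}\!\big[\ell(r,\alpha^*(r),\gamma^*(r),m(r))-\ell(r,0,\gamma^{\sigma,y,0}(r),m(r))\big]\,\dd r\le v[m](\sigma+h,\gamma^{\sigma,y,0}(\sigma+h))-v[m](\sigma+h,\gamma^*(\sigma+h)).
\]
Here the left-hand side is $\ge C_{\ell,1}J_h-C(1+|y|^p)h$ (coercivity together with $|\gamma^{\sigma,y,0}(r)|\le C(1+|y|)$), while the right-hand side is controlled by (i): the two trajectories start at $y$, so Gronwall gives $|\gamma^{\sigma,y,0}(\sigma+h)-\gamma^*(\sigma+h)|\le e^{L_AT}C_B\,m_h$ and, for $h$ small, both endpoints are $\le C(1+|y|)$, whence the right-hand side is $\le C(1+|y|^{p-1})h^{1-1/p}J_h^{1/p}$. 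Young's inequality then absorbs the factor $J_h^{1/p}$ into $\frac12 C_{\ell,1}J_h$ and leaves $\frac1h\int_\sigma^{\sigma+h}|\alpha^*(r)|^p\,\dd r\le C_{\text{b}}^p(1+|y|^p)$ with $C_{\text{b}}$ independent of $(t,x,m)$; letting $h\to0^+$ at a Lebesgue point of $r\mapsto|\alpha^*(r)|^p$ and using $(1+a^p)^{1/p}\le 1+a$ gives~\eqref{eq:cota_control}.

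The hard part is this last step: the bound must involve $|\gamma^*(\sigma)|$ rather than $|x|$. A direct perturbation that zeroes $\alpha^*$ on $[\sigma,\sigma+h]$ while keeping $\alpha^*$ afterwards perturbs the whole tail of the trajectory and produces an error containing $\int_\sigma^T|\alpha^*(r)|^{p-1}\,\dd r$, hence $|x|$; routing the cost-to-go through the value function $v[m]$, whose local Lipschitz modulus from (i) is uniform, together with the fact that the trajectory is perturbed only by $O(m_h)=O(h^{1-1/p})$, confines every $|x|$-dependent contribution to a term carrying a positive power of $h$, which disappears in the limit. A secondary technical point is the weak lower semicontinuity of the running cost used in the existence step, which is dealt with in the usual way.
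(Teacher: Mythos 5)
Your proposal is correct and follows essentially the same route as the paper: direct method with coercivity and weak $L^p$-compactness for existence, the optimal control of one initial point used as a competitor at the other (combined with Gr\"onwall, \eqref{h:h1_iii}--\eqref{h:h1_iv}, H\"older, and the a priori $L^p$ bound on $\alpha^*$) for \ref{prop:value_function_i}, and the dynamic programming comparison with the null control on $[\sigma,\sigma+h]$, the local Lipschitz estimate from \ref{prop:value_function_i}, Young's inequality, and Lebesgue differentiation for \ref{prop:value_function_ii}. The subtlety you flag at the end --- obtaining a bound in $|\gamma^*(\sigma)|$ rather than $|x|$ by routing the cost-to-go through $v[m]$ --- is exactly how the paper handles it.
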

\begin{proof}  Let $(\alpha_{n})_{n\in\NN}\subset L^{p}([t,T];\RR^{r})$ be a minimizing sequence for the right-hand side of~\eqref{eq:value_function}. Estimate~\eqref{h:h1_i} and~\eqref{h:h1_ii} imply that $(\alpha_{n})_{n\in\NN}$ is bounded in $L^{p}([t,T];\RR^{r})$  and hence, up to some subsequence,  it converges weakly to some $\alpha^{*}\in L^{p}([t,T];\RR^{r})$. It follows from~\eqref{eq:state_equation} and \ref{h:h2}\ref{h:h2_i}-\ref{h:h2_ii} that $\gamma^{t,x,\alpha_{n}}$ converges uniformly in $[t,T]$ to $\gamma^{t,x,\alpha^{*}}$  and hence, by~\cite[Theorem~3.23]{dacorogna89}, we deduce that $v[m](t,x)=J^{t,x}[m](\alpha^{*})$. Denoting by $\alpha^{0}$  the  null control and $\gamma^{0}=\gamma^{t,x,\alpha_{0}}$, estimate ~\eqref{eq:cota_A} and Gr\"onwall's lemma imply that 
$\sup_{s\in[t,T]}|\gamma^{0}(s)|\leq e^{C_{A}T}(|x|+C_{A}T)$. In turn, it follows from~\eqref{h:h1_i},~\eqref{h:h1_ii},~\eqref{eq:cota_A}, and Gr\"onwall's inequality that 
\begin{multline}
\label{eq:cota_lp_control_0}
C_{\ell,1}\int_{t}^{T}|\alpha^{*}(s)| ^{p}\dd s\leq TC_{\ell,2}+J^{t,x}[m](\alpha^{0})+C_{g,1}\\
\leq\Big(C_{\ell,2}+C_{\ell,3}(1+ e^{pC_{A}T}(|x|+C_{A}T)^{p})\Big)T+C_{g,1} + C_{g,2}\Big(1+e^{pC_{A}T}(|x|+C_{A}T)^{p}\Big)
\end{multline}
and  hence there exists $\tilde{C}>0$, independent  of  $(t,x,m)$, such that 
\be 
\label{eq:alpha_star_bounded_lp_x}
\|\alpha^{*}\|_{L^{p}}\leq \tilde{C}(1+|x|).
\ee
In particular, it holds that  
\be 
\label{eq:value_function_bounded_lp}
v[m](t,x)=\inf\big\{J^{t,x}[m](\alpha)\,|\,\alpha\in L^{p}([t,T];\RR^{r}), \;
\|\alpha\|_{L^{p}}\leq \tilde{C}(1+|x|)\big\}\quad\text{for all }(t,x)\in[0,T]\times \RR^{d}.
\ee
Let us now show assertions \ref{prop:value_function_i}-\ref{prop:value_function_ii}. 

\ref{prop:value_function_i}:  Let $y\in\RR^{d}$ and set $\gamma^{*}=\gamma^{t,x,\alpha^{*}}$ and  $\tilde{\gamma}=\gamma^{t,y,\alpha^{*}}$. By standard arguments, it follows from~\ref{h:h2}\ref{h:h2_i}-\ref{h:h2_ii} and Gr\"onwall's lemma that 
\be \label{eq:grownwall_1}
\sup_{s\in[t,T]}|\gamma^{*}(s)|\leq C(1+|x|),
\quad
\sup_{s\in[t,T]}|\tilde{\gamma}(s)|\leq C(1+|x|+|y|), \quad \text{and}
\quad
\sup_{s\in [t,T]}|\tilde{\gamma}^{*}(s)-\gamma^{*}(s)|\leq C|x-y|,
\ee
for some $C>0$ independent of $(t,x,y,m)$. In turn, by~\eqref{h:h1_ii}, \eqref{h:h1_iii},
H\"older's inequality, and~\eqref{eq:alpha_star_bounded_lp_x}, we have 
\begin{align}
v[m](t,y)-v[m](t,x)&\leq \int_t^T\Big(\ell(s, \alpha^{*}(s), \tilde{\gamma}(s),m(s))-\ell(s, \alpha^{*}(s),\gamma^{*}(s),m(s))\Big)\dd s\nonumber\\
&\hspace{0.3cm}+g(\tilde{\gamma}(T),m(T))-g(\gamma^{*}(T),m(T))\nonumber\\
&\leq C_{\ell,4}\int_t^T \left(1+|\tilde{\gamma}(s)|^{p-1}+|\gamma^{*}(s)|^{p-1}+|\alpha^{*}(s)|^{p-1}\right)|\tilde{\gamma}(s)-\gamma^{*}(s)|\dd s\nonumber\\
&\hspace{0.3cm}+ C_{g,3}\left(1+|\tilde{\gamma}(T)|^{p-1}+|\gamma^{*}(T)|^{p-1}\right)|\tilde{\gamma}(T)-\gamma^{*}(T)|\\
&\leq C_{\text{{\rm Lip}}}(1+|x|^{p-1}+|y|^{p-1})|x-y|, 
\end{align}
for some $C_{\text{{\rm Lip}}}>0$ independent of $(t,x,y,m)$. The inequality for $v[m](t,x)-v[m](t,y)$ follows by exchanging the roles of $x$ and $y$ in the previous computation.

\ref{prop:value_function_ii}: Let $s\in[t,T[$, $h\in [0,T-s[$, and set $y^{*}=\gamma^{*}(s)$. Since $v[m]$ satisfies the dynamic programming inequality (see e.g.~\cite{MR1484411})
\be 
v[m](s,\gamma^{*}(s))\leq\int_{s}^{s+h}\ell(r,\gamma^{s,y^*,\alpha}(r),\alpha(r),m(r))\dd r+v[m]\big(s+h,\gamma^{s,y^*,\alpha}(s+h)\big),
\ee
for all $\alpha\in L^{p}([t,T];\RR^d)$, with equality for  $\alpha=\alpha^{*}$, by taking $\alpha=\alpha_0$ (the null control) and $\alpha=\alpha^{*}$,  the equality $v[m](s,y^{*})=J^{s,y^{*}}[m](\alpha^{*}|_{[s,T]})$, 
and estimate~\eqref{h:h1_i} yield
\begin{align}
C_{\ell,1}\int_{s}^{s+h}|\alpha^{*}(r)|^{p}\dd r&\leq C_{\ell,2}h+C_{\ell,3}\int_{s}^{s+h}\big(1+|\gamma^{s,y^{*},\alpha_{0}}(r)|^{p}\big)\dd r\nonumber\\
&\hspace{0.3cm}+v[m](s+h,\gamma^{s,y^{*},\alpha_{0}}(s+h))-v[m](s+h,\gamma^{*}(s+h)).
\label{eq:lipschitz_cuenta}
\end{align}
In what follows, $C>0$ will denote a constant independent of $(t,x,y,m)$ which may change from line to line.
Since \ref{h:h2}\ref{h:h2_i},~\eqref{eq:cota_A}, and  Gr\"onwall's lemma imply the existence of $C>0$ such that
\be
|\gamma^{*}(s+h)-\gamma^{s,y^{*},\alpha_{0}}(s+h)|\leq C\int_{s}^{s+h}|\alpha^{*}(r)|\dd r, 
\ee
we deduce from~\eqref{eq:lipschitz_cuenta},~\eqref{eq:grownwall_1},~\ref{prop:value_function_i}, 
 and  Young's inequality the existence of $C>0$ such that 
\be 
\int_{s}^{s+h}|\alpha^{*}(r)|^{p}\dd r\leq Ch(1+|y^{*}|^{p})
\ee
and, hence,~\eqref{eq:cota_control} follows from the Lebesgue differentiation theorem (see e.g.~\cite{MR2267655}).
\end{proof}
\begin{remark}
\label{rem:reescritura_de_v}
Proposition~\ref{prop:value_function}\ref{prop:value_function_ii} implies that, for any $(t,x)\in[0,T]\times\RR^{d}$,  $v[m](t,x)$ can be rewritten as 
\be 
\label{eq:value_function_bis}
v[m](t,x)=\inf\big\{J^{t,x}[m](\alpha)\,|\,\alpha\in L^{\infty}([t,T];\RR^{r}),\;|\alpha(s)|\leq C_{{\rm b}}(1+|\gamma^{t,x,\alpha}(s)|)\;\text{for a.e. }s\in [t,T]\big\}.
\ee
\end{remark}
We consider now the approximation of the value function $v[m]$ given by~\eqref{eq:value_function}. Let $N_t\in\NN$, $N_s\in \NN$, with $N_{s}\geq N_{t}$, and set $\Delta t=1/N_t$, $\Delta x=1/N_x$, $\I=\{0,\hdots,N_{t}\}$, $\I^{*}=\I\setminus\{N_{t}\}$, and $\G=\{i\Delta x\,|\,i\in\ZZ^{d}\}$. Given a regular mesh $\mathscr{T}$ with vertices in $\G$, let $(\psi_{x})_{x\in\G}$ be a $\QQ_1$ basis, i.e. for every $x\in\G$, $\psi_{x}$ is a nonnegative polynomial of partial degree less than or equal to $1$ on each element of $\mathscr{T}$, $\psi_{x}(x)=1$, $\psi_{x}(y)=0$ for all $y\in\G$ with $y\neq x$, and $\sum_{x\in\G}\psi_{x}(z)=1$ for all $z\in \RR^{d}$.  
Given  $\varphi\colon \G\to\RR$, define its interpolant $I[\varphi]\colon \RR^{d}\to\RR$ by
$$
I[\varphi](x)=\sum_{y \in \G}\psi_{y}(x)\varphi(y)  \quad \text{for all }x\in\RR^{d}.
$$
In view of Remark~\ref{rem:reescritura_de_v}, a standard semi-Lagrangian scheme (see e.g.~\cite{MR3341715}) to approximate $v[m]$ is given by 
\be
\label{eq:SL_fully_discreto_HJB}
\ba{rcl}
V_k(x)&=&\min\limits_{a\in \ov{B}_{\infty}(0,C_{{\rm b}}(1+|x|))} \bigg\{\Delta t \ell(t_k,a, x,m(t_k)) +I[V_{k+1}](\Phi(k,x,a))\bigg\} \\[12pt]
 \; & \; & \hspace{8.5cm}\text{for all } k\in \I^*,\,x\in \G, \\[-4pt]
V_{N_t}(x)&=& g(x,m(T)) \quad \text{for all }x\in \G, 
\ea  
\ee
where 
\be 
\label{def:y}
\Phi(k,x,a)=x+\Delta t(A(t_k,x)+B(t_k)a)\quad\text{for all }k\in\I^{*},\, x\in\G,\, a\in\RR^{r}.
\ee
We now introduce a variation of the previous scheme which exploits the particular structure of the dynamics in~\eqref{eq:state_equation}. First, notice that, by \ref{h:h2}\ref{h:h2_iii}, without loss of generality we can write
\be
\label{A_B_structure}
A(t,x)=\left(\ba{c}A_1(t,x)\\ 
A_2(t,x)\ea\right)\quad\mbox{and}\quad B(t)= \left( \ba{c} B_{1}(t) \\
B_2(t)\ea \right)\quad\mbox{for all }(t,x)\in [0,T]\times\RR^{d},
\ee 
where $A_1\colon [0,T]\times\cR^d\to\cR^r$, $A_2\colon [0,T]\times\cR^d\to \cR^{d-r}$, $B_{1}\colon  [0,T]\to \RR^{r \times r}$ is such that $B_1(t)$ is invertible for all $t \in [0,T]$, and $B_{2}\colon  [0,T]\to \RR^{(d-r) \times r}$. We partition the coordinates of $x\in\RR^{d}$ accordingly by writing $x=(\mathrm{x}_1, \mathrm{x}_2)$, where $\mathrm{x}_1\in \RR^{r}$ and $\mathrm{x}_2\in \RR^{d-r}$ denote the first $r$ and the last $d-r$ components of $x$, respectively. We also write $\G=\G_{r}\times\G_{d-r}$, where 
$$
\G_r =\left\{i\Delta x\;|\;i\in\ZZ^r\right\}\quad\text{and}\quad \G_{d-r} =\left\{i\Delta x\;|\;i\in\ZZ^{d-r}\right\},
$$
and we suppose that the basis $(\psi_{x})_{x\in \G}$ can be decomposed as the tensorial product of two $\QQ_{1}$ basis $(\eta_{\mathrm{x}_1})_{\mathrm{x}_1\in \G_{r}}$ and $(\beta_{\mathrm{x}_2})_{\mathrm{x}_2\in \G_{d-r}}$ 
defined on regular meshes with vertices in $\G_{r}$ and $\G_{d-r}$, respectively. More precisely, we suppose that for every $x=(\mathrm{x}_1,\mathrm{x}_2)\in\G$ we have
\be
\label{def:psi_tensor_product}
\psi_{x}(y)=\eta_{\mathrm{x}_{1}}(\mathrm{y}_{1})\beta_{\mathrm{x}_{2}}(\mathrm{y}_{2})\quad\text{for all }y=(\mathrm{y}_1,\mathrm{y}_2)\in\RR^{d}.
\ee
In what follows, we assume the  existence of $C_{I}>0$, independent of $\Delta x$, such  that 
\be
\label{support_beta_x}
\supp(\beta_{\mathrm{x}_{2}}) \subseteq  \{\mathrm{y}_{2}\in \RR^{d-r}\,|\, |\mathrm{y}_{2}-\mathrm{x}_{2}| \leq C_I \Delta x \}\quad\text{for all }\mathrm{x}_{2}\in\G_{d-r}.
\ee  
Let $k\in\I^{*}$ and $x\in\RR^{d}$. In the modified version of ~\eqref{eq:SL_fully_discreto_HJB}, we will only consider controls $a\in\ov{B}_{\infty}(0,C_{{\rm b}}(1+|x|))$ such that, writing 
$\Phi(k,x,a)=(\Phi_1(k,x,a),\Phi_2(k,x,a))$, we have $\Phi_1(k,x,a)\in \G_{r}$. Notice that, for every $\mathrm{y}_{1}\in\G_{r}$, it holds that
\be
\label{def:alpha_fullydiscrete}
\mathrm{y}_{1}=\Phi_1(k,x,a)\;\Leftrightarrow\; a=B_1(t_k)^{-1}\left[ \frac{\mathrm{y}_1-\mathrm{x}_1}{\Delta t}-A_1(t_k, x)\right].
\ee
Thus, setting
\be
\label{definition_alpha_n}
\ba{rcl}
\alpha (k,x, \mathrm{y}_1)&:=&B_1(t_k)^{-1}\left[ \frac{\mathrm{y}_1-\mathrm{x}_1}{\Delta t}-A_1(t_k, x)\right]\in \RR^r,\\[6pt]
\ydos(k,x,\mathrm{y}_1)&:= &  \mathrm{x}_2 +\Delta t \left[A_2(t_k, x)+B_2(t_k)\alpha(k, x, \mathrm{y}_1)\right]  \in\RR^{d-r},
\ea
\ee
it is natural to define the sets 
\be
\label{definicion_de_los_S}
\ba{rcl}
\SS_{k+1}^1(x)&=&\left\{\mathrm{y}_1\in\G_r\; \big|\; |\alpha (k,x, \mathrm{y}_1)|\leq  C_{{\rm b}}(1+|x|)\right\}, \\[6pt]
\SS_{k+1}^2(x,\mathrm{y}_1)&=&\left\{\mathrm{y}_2 \in\G_{d-r}\; \big|\; \ydos(k,x,\mathrm{y}_1)\in \supp{\beta_{\mathrm{y}_2}} \right\} \quad \text{for }  \mathrm{y}_1\in \SS_{k+1}^1(x),\\[6pt]
\SS_{k+1}(x)&=&\left\{(\mathrm{y}_1,\mathrm{y}_2)\in\G\; \big|\; \mathrm{y}_1\in\SS_{k+1}^1(x),\;      \mathrm{y}_2\in \SS_{k+1}^2(x,\mathrm{y}_1)\right\}.
\ea
\ee
Arguing as in~\cite[Lemma~3.2]{Gianatti-Silva-arxiv}, one checks that, if $\Delta x/\Delta t$ is small enough, then $\SS_{k+1}(x)\neq\emptyset$. Starting from an initial grid $\SS_{0}=\G\cap \ov{B}_{\infty}(0,C^{*})$, we can then construct the family of time-depending grids 
\be
\label{eq:family_s}
\SS_{k+1}=\bigcup_{x\in\SS_{k}}\SS_{k+1}(x)\quad\text{for all }k\in\I^{*}
\ee
with the property that $(\mathrm{y}_{1},\mathrm{y}_{2})\in \SS_{k+1}$ if and only if there exists $x\in\SS_{k}$ and $a\in\RR^{r}$ such that $|a|\leq C_{{\rm b}}(1+|x|)$, $\mathrm{y}_{1}=\Phi_1(k,x,a)$, and $\Phi_2(k,x,a)\in \supp{\beta_{\mathrm{y}_2}}$.

On the other hand, notice that, for every $\varphi\colon\G\to\RR$, $\mathrm{y}_{1}\in\G_{r}$, and $\mathrm{y}_{2}\in\RR^{d-r}$, we have 
\begin{multline}
\label{eq:reduction_interpolation}
I[\varphi](\mathrm{y}_{1},\mathrm{y}_{2})=\sum_{\mathrm{z}_{1}\in\G_{r},\mathrm{z}_{2}\in\G_{d-r}}\psi_{z}(\mathrm{y}_{1},\mathrm{y}_{2})\varphi(\mathrm{z}_{1},\mathrm{z}_{2})\\
=\sum_{\mathrm{z}_{1}\in\G_{r}}\eta_{z_{1}}(\mathrm{y}_{1})\sum_{\mathrm{z}_{2}\in\G_{d-r}}\beta_{\mathrm{z}_{2}
}(\mathrm{y}_{2})\varphi(\mathrm{z}_{1},\mathrm{z}_{2})
=\sum_{\mathrm{z}_{2}\in\G_{d-r}}\beta_{\mathrm{z}_{2}
}(\mathrm{y}_{2})\varphi(\mathrm{y}_{1},\mathrm{z}_{2}).
\end{multline}

Altogether,~\eqref{eq:SL_fully_discreto_HJB}-\eqref{eq:reduction_interpolation} suggest to consider the following variation of~\eqref{eq:SL_fully_discreto_HJB}:
\be
\label{fully_discreto_HJB}
\ba{rcl}
 \mathcal{V}_k(x) &=&\min\limits_{ \mathrm{y}_1\in  \SS^1_{k+1}(x)} \bigg\{\Delta t \ell(t_k, \alpha(k,x, \mathrm{y}_1), x,m(t_k)) +I_{\SS_{k+1}^2(x,\mathrm{y}_1)}[\mathcal{V}_{k+1}(\mathrm{y}_1,\cdot)](\ydos(k,x,\mathrm{y}_1))\bigg\}\\[12pt]
 \; & \; & \hspace{10cm}\text{for all } k\in \I^*,\,x\in \SS_{k}, \\[-4pt]
\mathcal{V}_{N_t}(x)&=& g(x,m(T)) \quad \text{for all }x\in \SS_{N_t},
\ea  
\ee
where, for $F\subseteq\G_{d-r}$ and $\varphi\colon F\to\RR$, we  have set 
$$
I_{F}[\varphi](\mathrm{y}_2)=\sum_{\mathrm{x}_2 \in F}\beta_{\mathrm{x}_2}(\mathrm{y}_2)\varphi(\mathrm{x}_2 )  \quad \text{for all }\mathrm{y}_2\in\RR^{d-r}.
$$
Notice that~\eqref{fully_discreto_HJB} can be rewritten as 
\be
\label{fully_discrete_HJB}
\ba{l}
\mathcal{V}_k(x) =\min\limits_{p \in\P(\SS^1_{k+1}(x))}\bigg\{\sum\limits_{\mathrm{y}_1 \in  \SS^1_{k+1}(x)} p(\mathrm{y}_1)\Big[\Delta t \ell(t_k, \alpha(k,x, \mathrm{y}_1), x,m(t_k)) \\[5pt]\hspace{5cm}+I_{\SS_{k+1}^2(x,\mathrm{y}_1)}[\mathcal{V}_{k+1}(\mathrm{y}_1,\cdot)]\big(\ydos(k,x,\mathrm{y}_1)\big)\Big]\bigg\}\quad \text{for all }k\in \I^*, \; x\in \SS_{k},  \\[2pt]
\mathcal{V}_{N_t}(x)= g(x,m(T))\quad\text{for all }x\in \SS_{N_t}.
\ea  
\ee

The following result, which shows that the family of time dependent grids $(\SS_{k})_{k\in\I}$ remains uniformly bounded with respect to the discretization parameters, will play a key role in what follows. 

\begin{lemma}
\label{the_big_compact} 
There exists a nonempty compact set $K\subset \RR^{d}$, independent of $\Delta t$ and $\Delta x$ as long as $\Delta x /\Delta t \leq 1$, such that 
$$
\SS_{k}\subset K\quad\text{for all }k\in \I^*.
$$
\end{lemma}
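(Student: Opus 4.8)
The plan is to track the growth of $\max_{x\in\SS_k}|x|$ step by step in $k$ and show it stays bounded by a geometric-type recursion whose limit is controlled independently of $\Delta t$ and $\Delta x$. Recall that, by~\eqref{eq:family_s} and the characterization of $\SS_{k+1}$ just below it, a point $(\mathrm{y}_1,\mathrm{y}_2)\in\SS_{k+1}$ arises from some $x\in\SS_k$ and some $a\in\RR^r$ with $|a|\leq C_{\mathrm{b}}(1+|x|)$ via $\mathrm{y}_1=\Phi_1(k,x,a)$ and $\Phi_2(k,x,a)\in\supp\beta_{\mathrm{y}_2}$. Using the explicit form~\eqref{def:y} of $\Phi$, the bound~\eqref{eq:cota_A} on $A$ (so $|A(t_k,x)|\leq C_A(1+|x|)$), and $C_B=\sup_t|B(t)|$, the first block satisfies $|\mathrm{y}_1|=|\Phi_1(k,x,a)|\leq |x|+\Delta t\,(C_A(1+|x|)+C_B C_{\mathrm{b}}(1+|x|))$. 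For the second block, since $\Phi_2(k,x,a)\in\supp\beta_{\mathrm{y}_2}$, the support condition~\eqref{support_beta_x} gives $|\mathrm{y}_2-\Phi_2(k,x,a)|\leq C_I\Delta x$, hence $|\mathrm{y}_2|\leq |\Phi_2(k,x,a)|+C_I\Delta x\leq |x|+\Delta t\,(C_A(1+|x|)+C_B C_{\mathrm{b}}(1+|x|))+C_I\Delta x$. Setting $M_k=\max_{x\in\SS_k}|x|$ (with $M_0\leq C^*$ by the choice $\SS_0=\G\cap\ov B_\infty(0,C^*)$) and $c:=C_A+C_B C_{\mathrm{b}}$, we obtain
$$
M_{k+1}\leq M_k+\Delta t\,c\,(1+M_k)+C_I\Delta x\leq (1+c\,\Delta t)M_k+c\,\Delta t+C_I\Delta x.
$$

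Now I would iterate this linear recursion. Using $\Delta x/\Delta t\leq 1$, we have $C_I\Delta x\leq C_I\Delta t$, so $M_{k+1}\leq(1+c\,\Delta t)M_k+(c+C_I)\Delta t$. A standard discrete Grönwall argument over $k\leq N_t$ steps, together with $N_t\Delta t=1$ (so $T=1$; in general $N_t\Delta t=T$ and $(1+c\,\Delta t)^{N_t}\leq e^{cT}$), yields
$$
M_k\leq e^{cT}M_0+\frac{c+C_I}{c}\big(e^{cT}-1\big)\leq e^{cT}C^*+\frac{c+C_I}{c}\big(e^{cT}-1\big)=:R,
$$
valid for all $k\in\I^*$, with $R$ depending only on $C^*$, $C_A$, $C_B$, $C_{\mathrm{b}}$, $C_I$, and $T$ — in particular independent of $\Delta t$ and $\Delta x$ as long as $\Delta x/\Delta t\leq 1$. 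Taking $K=\ov B_\infty(0,R)$, which is a nonempty compact subset of $\RR^d$, gives $\SS_k\subset K$ for all $k\in\I^*$, as claimed.

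The only genuinely delicate point is making sure the per-step estimate on $\mathrm{y}_2$ is uniform: one must check that $|\Phi_2(k,x,a)|$ is controlled using only $|x|\leq M_k$, the linear growth~\eqref{eq:cota_A} of $A_2$, the bound $|B_2(t_k)|\leq C_B$, and the constraint $|a|\leq C_{\mathrm{b}}(1+|x|)$ coming from membership of $\mathrm{y}_1$ in $\SS^1_{k+1}(x)$ — the point being that the control $a$ entering $\Phi_2$ is exactly $\alpha(k,x,\mathrm{y}_1)$ with $\mathrm{y}_1\in\SS^1_{k+1}(x)$, so its norm is $\leq C_{\mathrm{b}}(1+M_k)$ by~\eqref{definicion_de_los_S}. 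Once this is in hand, the recursion closes and the discrete Grönwall step is routine. Non-emptiness of $K$ is immediate, and non-emptiness of each $\SS_k$ (needed only implicitly) follows from the remark after~\eqref{definicion_de_los_S} that $\SS_{k+1}(x)\neq\emptyset$ when $\Delta x/\Delta t$ is small enough, which we may assume; in any case the stated inclusion is vacuously true if some $\SS_k$ is empty.
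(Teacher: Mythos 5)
Your proof is correct and follows essentially the same route as the paper: the paper defines nested compact sets $K_{k+1}=K_k+\bigl(\Delta t\,(C_A+C_BC_{\rm b})(1+\sup_{x\in K_k}|x|)+C_I\Delta x\bigr)\ov{B}_\infty(0,1)$ containing $\SS_k$, derives exactly your recursion $c_{k+1}\leq(1+c\,\Delta t)c_k+(c+C_I)\Delta t$ using $\Delta x/\Delta t\leq 1$, and concludes by discrete Gr\"onwall. Your version merely tracks $\max_{x\in\SS_k}|x|$ directly and makes the limiting constant explicit; there is no substantive difference.
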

\begin{proof} 
Consider the following family of compact sets: set $K_0=\ov{B}_{\infty}(0,C^{*})$ and 
\be
\label{compacts_Ks}
K_{k+1} := K_{k} +\left(\Delta t\left[ \big(C_{A}+C_{B} C_{{\rm b}}\big)\big(1+\sup_{x\in K_{k}}|x|\big)\right]+C_{I}\Delta x\right) \ov{B}_{\infty}(0,1)  \quad \text{for all } k \in \I^*,
\ee
where we recall that $C_{A}$ is given in~\eqref{eq:cota_A}, $C_{B}=\sup_{t\in[0,T]}|B(t)|$, and $C_{I}$ satisfies~\eqref{support_beta_x}. It follows from~\eqref{definicion_de_los_S} and~\eqref{eq:family_s} that $\SS_{k}\subset K_{k}$ for all $k\in\I$ and hence it suffices to show that the family $(K_{k})_{k\in\I}$ is uniformly bounded.  Let $k\in \I$ and set $c_k=\sup_{x\in K_{k}} |x|$. Equation~ \eqref{compacts_Ks} yields
$$
\ba{rcl}
c_{k+1}&\leq & c_{k}+ \left(\Delta t\left[ \big(C_{A}+C_{B}C_{{\rm b}}\big)\big(1+c_{k}\big) +C_{I}\frac{\Delta x}{\Delta t}\right]\right) \\[6pt]
\; &\leq & \big(1+ \Delta t(C_{A}+C_{B}C_{{\rm b}})\big) c_{k} + \Delta t (C_{A}+ C_{B}C_{{\rm b}}  +C_{I}),
\ea
$$
which, by the discrete Gr\"onwall's lemma, implies that the set $\{c_{k}\,|\,k\in\I\}$ is uniformly bounded. The result follows.
\end{proof}

\begin{proposition} 
\label{prop:convergencia-value-function-discrete-continuous}
Assume that~\ref{h:h1} and~\ref{h:h2} hold. Consider three sequences $(N_t^{n},N^n_s)\subset \NN^{2}$  and $(m_{n})_{n\in\NN}\subset C([0,T];\P_{1}(\RR^{d}))$ such that, as $n\to\infty$,  $N_t^{n} \to \infty$, $N^n_s \to \infty$, $   N^n_t/N^n_s \to 0$, and $m_{n}\to m^{*}$ for some $m^{*}\in C([0,T];\P_{1}(\RR^{d}))$. Set $\I^{n}=\{0,\hdots,N^{n}_{t}\}$ and, associated with the parameters $(N_t^{n},N^n_s)$ and $m^{n}$, define  $\SS_k^{n}$ as in~\eqref{definicion_de_los_S} and  denote  by  $\V^{n}$ the solution to~\eqref{fully_discreto_HJB}. Then it holds that 
\be
\label{convergencia_value_function_HJB}
\sup \left\{ \left|\V_{k}^{n}(x) - v[m^{*}](t_k^{n},x) \right| \; \big| \; k\in \I^{n}, \;  x\in \SS_k^{n}\right\}\underset{n\to \infty}{\longrightarrow} 0,
\ee
where $v[m^{*}]$ is defined in~\eqref{eq:value_function}.
\end{proposition}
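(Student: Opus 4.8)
The plan is to obtain~\eqref{convergencia_value_function_HJB} by a control-theoretic comparison, establishing separately the bounds $\V_k^n(x)\le v[m^*](t_k^n,x)+o(1)$ and $\V_k^n(x)\ge v[m^*](t_k^n,x)-o(1)$, where $o(1)$ denotes a quantity tending to $0$ as $n\to\infty$ uniformly in $k\in\I^n$ and $x\in\SS_k^n$. The first ingredient is Lemma~\ref{the_big_compact}: since $\Delta x^n/\Delta t^n=N_t^n/N_s^n\le 1$, all the grids $\SS_k^n$ lie in one fixed compact set $K\subset\RR^d$, so the polynomial growth conditions of~\ref{h:h1} become uniform bounds and $\ell$, $g$, $A$, $B$ are uniformly continuous on the relevant fixed compact sets. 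Moreover, since $m_n\to m^*$ in $C([0,T];\P_1(\RR^d))$, the family $\{m_n(t):t\in[0,T],\,n\in\NN\}\cup\{m^*(t):t\in[0,T]\}$ is relatively compact in $\P_1(\RR^d)$, so $\ell$ and $g$ are uniformly continuous in the measure variable along it; and by~\eqref{eq:cota_A}, the bound $|\alpha|\le C_{{\rm b}}(1+|\cdot|)$, and Gr\"onwall's lemma, any trajectory $\gamma^{t,x,\alpha}$ issued from $K$ and driven by such a control stays in a fixed compact set with uniformly bounded velocity. Finally we need a uniform stability estimate for the scheme: arguing by backward induction on $k$ as in~\cite{Gianatti-Silva-arxiv} and using the smallness of $\Delta x^n/\Delta t^n$, one gets $|\V_k^n(x)|\le C$ and the discrete Lipschitz bound $|\V_k^n(x)-\V_k^n(y)|\le C_{\rm Lip}|x-y|$ for $x,y\in\SS_k^n$, with $C$ and $C_{\rm Lip}$ independent of $n$; this is what lets us control the $\QQ_1$ interpolation in the $\RR^{d-r}$-component.

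For the lower bound, fix $k$ and $x\in\SS_k^n$ and read~\eqref{fully_discreto_HJB} backwards from $x_k=x$: at step $j$ pick a minimiser $\mathrm{y}_1^j\in\SS^1_{j+1}(x_j)$ and, since $I_{\SS^2_{j+1}(x_j,\mathrm{y}_1^j)}[\cdot]$ is a convex combination (the $\beta_{\mathrm{x}_2}$ form a partition of unity and $\SS^2_{j+1}(x_j,\mathrm{y}_1^j)$ contains all $\mathrm{x}_2$ with $\beta_{\mathrm{x}_2}(\ydos(j,x_j,\mathrm{y}_1^j))>0$), pick $\mathrm{z}_2^j\in\SS^2_{j+1}(x_j,\mathrm{y}_1^j)$ with $\V_{j+1}^n(\mathrm{y}_1^j,\mathrm{z}_2^j)$ no larger than the interpolated value. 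With $x_{j+1}=(\mathrm{y}_1^j,\mathrm{z}_2^j)\in\SS_{j+1}^n$ and $a_j=\alpha(j,x_j,\mathrm{y}_1^j)$, iteration yields $\V_k^n(x)\ge\sum_{j=k}^{N_t-1}\Delta t\,\ell(t_j,a_j,x_j,m_n(t_j))+g(x_{N_t},m_n(T))$, with the $a_j$ uniformly bounded since $|a_j|\le C_{{\rm b}}(1+|x_j|)$ and $x_j\in K$. Let $\alpha^n\in L^p([t_k,T];\RR^r)$ be piecewise constant, equal to $a_j$ on $[t_j,t_{j+1})$, and $\gamma^n=\gamma^{t_k,x,\alpha^n}$. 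A discrete Gr\"onwall argument — using that $A$ is Lipschitz in space and uniformly continuous in time, $B$ uniformly continuous, and $|\mathrm{z}_2^j-\ydos(j,x_j,\mathrm{y}_1^j)|\le C_I\Delta x$ by~\eqref{support_beta_x} — gives $\max_j|\gamma^n(t_j)-x_j|=o(1)$, hence $|\gamma^n(s)-x_j|=o(1)$ for $s\in[t_j,t_{j+1}]$ by the bounded velocity. Comparing $J^{t_k,x}[m^*](\alpha^n)$ with the above discrete sum term by term, using the uniform continuity of $\ell$ and $g$ together with $\sup_{s\in[t_j,t_{j+1}]}d_1(m^*(s),m_n(t_j))=o(1)$, gives $J^{t_k,x}[m^*](\alpha^n)\le\V_k^n(x)+o(1)$; since $v[m^*](t_k,x)\le J^{t_k,x}[m^*](\alpha^n)$ by~\eqref{eq:value_function}, the lower bound follows.

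For the upper bound, fix $k$ and $x\in\SS_k^n$ and let, by Proposition~\ref{prop:value_function} and Remark~\ref{rem:reescritura_de_v}, $\bar\alpha$ be an optimal control for $v[m^*](t_k,x)$ with $|\bar\alpha(s)|\le C_{{\rm b}}(1+|\bar\gamma(s)|)$, $\bar\gamma=\gamma^{t_k,x,\bar\alpha}$ in a fixed compact set. Build a discrete path by $x_k=x$ and, at step $j$, let $\mathrm{y}_1^j\in\G_r$ be a grid point within $\Delta x$ (in the infinity norm) of $\mathrm{x}_1^j+\Delta t\big(A_1(t_j,x_j)+B_1(t_j)\bar a_j\big)$, where $\bar a_j=\Delta t^{-1}\int_{t_j}^{t_{j+1}}\bar\alpha(s)\,\mathrm ds$; by~\eqref{definition_alpha_n} this gives $\alpha(j,x_j,\mathrm{y}_1^j)=\bar a_j+B_1(t_j)^{-1}\rho_j/\Delta t$ with $|\rho_j|\le\Delta x$, so $\alpha(j,x_j,\mathrm{y}_1^j)-\bar a_j=o(1)$. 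A discrete Gr\"onwall estimate as in the lower bound gives $\max_j|x_j-\bar\gamma(t_j)|=o(1)$, and combining this with $|\bar a_j|\le C_{{\rm b}}(1+\sup_{[t_j,t_{j+1}]}|\bar\gamma|)$ and the bounded velocity of $\bar\gamma$ shows $|\alpha(j,x_j,\mathrm{y}_1^j)|\le C_{{\rm b}}(1+|x_j|)+o(1)$, so that $\mathrm{y}_1^j\in\SS^1_{j+1}(x_j)$ for $n$ large — after, if needed, replacing $C_{{\rm b}}$ in the scheme by a larger constant, which is harmless by Remark~\ref{rem:reescritura_de_v} (cf.~\cite[Lemma~3.2]{Gianatti-Silva-arxiv}). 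Applying~\eqref{fully_discreto_HJB} forward with these choices, using the discrete Lipschitz bound of the first step to replace $I_{\SS^2_{j+1}(x_j,\mathrm{y}_1^j)}[\V_{j+1}^n(\mathrm{y}_1^j,\cdot)](\ydos(j,x_j,\mathrm{y}_1^j))$ by $\V_{j+1}^n$ at a point $O(\Delta x)$-close to $\ydos(j,x_j,\mathrm{y}_1^j)$ (chosen to stay close to $\bar\gamma(t_{j+1})$), and then comparing term by term with $J^{t_k,x}[m^*](\bar\alpha)$ as in the lower bound, we obtain $\V_k^n(x)\le J^{t_k,x}[m^*](\bar\alpha)+o(1)=v[m^*](t_k,x)+o(1)$. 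Combining the two estimates gives~\eqref{convergencia_value_function_HJB}.

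The step I expect to be the main obstacle is the upper bound, for two reasons. First, one must check the admissibility $\mathrm{y}_1^j\in\SS^1_{j+1}(x_j)$ of the discretised near-optimal control, despite the $o(1)$ mismatch between $|\alpha(j,x_j,\mathrm{y}_1^j)|$ and $C_{{\rm b}}(1+|x_j|)$ coming from the trajectory error $|x_j-\bar\gamma(t_j)|$ and from replacing the exact flow by an explicit Euler step; this is exactly where the linear growth~\eqref{eq:cota_control} of the optimal feedback and the smallness of $\Delta x/\Delta t$ enter, following the line of argument of~\cite{Gianatti-Silva-arxiv}. Second, the uniform discrete Lipschitz estimate for $\V_k^n$ on the grids $\SS_k^n$, needed to absorb the interpolation error in the $\RR^{d-r}$-component, must be proved by backward induction with a constant not deteriorating as $n\to\infty$; the subtlety there is that the admissible control set $\SS^1_{k+1}(x)$ and the associated discrete dynamics depend on the base point, so comparing $\V_k^n$ at two nearby points once more requires constructing a near-optimal competitor, as in~\cite{Gianatti-Silva-arxiv}.
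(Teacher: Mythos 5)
Your proposal takes a genuinely different route from the paper's. The paper inserts the intermediate semi-discrete scheme~\eqref{eq:semidiscrete-scheme-dpp}, posed on all of $\RR^d$, proves its convergence to $v[m^*]$ locally uniformly via the Barles--Souganidis half-relaxed-limits framework of~\cite{BarSou91}, and then compares the semi-discrete and fully discrete schemes directly (as in~\cite[Lemma~5.3(ii)]{Gianatti-Silva-arxiv}), using the regularity of $v_d$ to absorb the interpolation and control-projection errors. You instead compare $\V^n$ with $v[m^*]$ directly by a two-sided verification argument: synthesizing a piecewise-constant continuous control from a discrete optimal policy for the bound $v[m^*]\le \V^n+o(1)$, and discretizing a continuous optimal control for the reverse bound. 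Both routes rest on the same two pillars, namely Lemma~\ref{the_big_compact} and the linear-growth bound~\eqref{eq:cota_control}, and your lower bound is complete: the selection of $\mathrm{z}_2^j$ below the convex combination, the discrete Gr\"onwall comparison with the $O(\Delta x/\Delta t)$ perturbation, and the term-by-term cost comparison (same constant control on both sides of each subinterval) all go through. What your approach buys is the avoidance of viscosity-solution machinery; what it costs is that all regularity must be established for $\V^n$ itself rather than for the nicer function $v_d$.

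Two points in your upper bound are genuine gaps as written. First, the ``term-by-term comparison as in the lower bound'' does not carry over: there you must compare $\Delta t\,\ell(t_j,\bar a_j,x_j,m_n(t_j))$ with $\int_{t_j}^{t_{j+1}}\ell(s,\bar\alpha(s),\bar\gamma(s),m^*(s))\,\dd s$, where $\bar\alpha$ is merely $L^\infty$ and may oscillate within $[t_j,t_{j+1}]$; uniform continuity of $\ell$ reduces this to $\Delta t\,\ell(t_j,\bar a_j,z,\mu)\le\int_{t_j}^{t_{j+1}}\ell(t_j,\bar\alpha(s),z,\mu)\,\dd s$, which is Jensen's inequality and requires convexity of $\ell$ in $a$ (available here only implicitly, through the paper's use of~\cite[Theorem~3.23]{dacorogna89} to obtain minimizers) or, alternatively, a preliminary $L^p$-approximation of $\bar\alpha$ by piecewise-constant controls. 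You must invoke one of these explicitly. Second, the uniform-in-$n$ discrete Lipschitz estimate for $\V_k^n$ on $\SS_k^n$, which you need to collapse the $\QQ_1$ interpolation in the $\RR^{d-r}$ component, is asserted rather than proved; it is itself a competitor-construction lemma of the same difficulty as your upper bound, since comparing $\V_k^n$ at two base points perturbs the reachable controls by $O(|x-y|/\Delta t)$ and raises the same admissibility question for $\SS^1_{k+1}(\cdot)$. The paper's detour through $v_d$ exists precisely to prove the Lipschitz estimate where the minimization is unconstrained and state-independent. Neither issue is fatal, but both must be closed, together with the enlargement of $C_{\rm b}$ in~\eqref{definicion_de_los_S} that you already flag.
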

\begin{proof}[Sketch of the proof] Let $m\in C([0,T];\P_{1}(\RR^{d}))$ and consider, as an intermediate step, the following semi-discrete scheme to approximate $v[m]$: 
\be
\label{eq:semidiscrete-scheme-dpp}
\ba{rcl}
v_d(k,x)&=&\underset{\a\in\cR^r}\min \; \left\{\Delta t \ell(t_k,\a,x,m(t_k))+v_d[m]\left(k+1,x+\Delta t[A(t_k, x)+ B(t_k)\a]\right)\right\}\\[5pt]
&\,&\hspace{7cm} \text{for all }k\in \I^{*},\; x\in\cR^d, \\[2pt]
v_d(N_t,x)&=&g(x,m(T))\quad\text{for all }x\in\cR^d.
\ea
\ee
Setting $v_{d}^{n}$ for the solution to the previous scheme associated with $m^{n}$, using the framework developed in~\cite{BarSou91}, and arguing as in~\cite[Proposition 3.1]{Gianatti-Silva-arxiv} one shows that for every compact set $K\subset\cR^d$ it holds that   
\be
\label{eq:converegence_sd_continuo}
\sup_{(k,x)\in\I^n\times K}\left|v^n_d(k,x)-v[m^*](t_k^n,x)\right|\underset{n\rightarrow \infty}{\longrightarrow} 0. 
\ee

On the other hand, by adapting to the discrete case the proof of Proposition~\ref{prop:value_function}, one checks that the minimization on the right-hand-side of~\eqref{eq:semidiscrete-scheme-dpp} can be restricted the set $\{a\in\RR^{d}\,|\,|a|\leq C_{{\rm b}}(1+|x|)\}$. Using this fact, Lemma~\ref{the_big_compact}, and arguing as in the proof of \cite[Lemma 5.3 (ii)]{Gianatti-Silva-arxiv} we obtain that 

\be 
\label{eq:diff_discrete_fully}
\max\left\{\left|v^n_d(k,x)-\mathcal{V}^n_k(x) \right|\;|\;k\in\I^n,\;x\in\SS^n_k\right\} \to 0
\quad\text{as $n\to \infty$}, 
\ee
and hence~\eqref{convergencia_value_function_HJB} follows from~\eqref{eq:converegence_sd_continuo} and \eqref{eq:diff_discrete_fully}.
\end{proof}
\section{The finite mean field game approximation}
\label{sec:approximation_mfg}
In this section, given $N_t\in\NN$,  $N_s\in\NN$, with $N_s\geq N_t$, we approximate Problem~\ref{mfg_problem} by a fixed point problem of a map ${\bf br}$, called {\it best response mapping} defined on the space $\M=\prod_{k \in \I}\P(\SS_{k})$ of discrete time marginals. The resulting approximation will take the form of a discrete time and finite state MFG (see~\cite{MR2601334}). In order to construct the map ${\bf br}$, let us first introduce some useful definitions. For every $k\in\I$, we identify $p\in\P(\SS_{k})$ with the probability measure $\sum_{x\in\SS_{k}}p(\{x\})\delta_{x}\in\P_{1}(\RR^{d})$ and, given a finite set $F$,  the (nonpositive) entropy function $\mathcal{E}_F\colon \mathcal{P}(F)\to\cR$ is defined by
$$
\mathcal{E}_F(p) =\sum_{x\in F}p(x)\log(p(x)) \quad \text{for all }p\in\P(F),
$$
with the convention that  $p(x)=p(\{x\})$ and $0\log(0)=0$. Given $M\in\M$ and $\eps>0$, let us consider the following variation of~\eqref{fully_discrete_HJB}:
\be
\label{eq:hjb_eq}
\ba{l}
V_k^{M}(x)=\min\limits_{p\in \mathcal{P}( \SS^1_{k+1}(x))} \bigg\{\sum\limits_{\mathrm{y}_1\in  \SS_{k+1}^1(x)}p(\mathrm{y}_1)\bigg[ \Delta t  \ell(t_k, \alpha(k,x, \mathrm{y}_1), x, M_k) \\[6pt]
\hspace{3.2cm} +I_{\SS_{k+1}^2(x,\mathrm{y}_1)}[V_{k+1}^{M}(\mathrm{y}_1,\cdot)](\ydos(k,x,\mathrm{y}_1))\bigg] +\eps\mathcal{E}_{\SS^1_{k+1}(x)}(p)\bigg\} \quad \text{for all } k\in \I^*,\,x\in \SS_{k}, \\ [8pt]
 V_{N_t}^{M}(x)=g(x, M_{N_t}) \quad \text{for all }x\in \SS_{N_t}.
\ea
\ee

Notice that the incorporation of the entropy term in the scheme above implies that, for every $k\in\I^{*}$ and $x\in\SS_{k}$, the optimization problem defining $V_{k}^{M}(x)$ admits a unique solution $p_{k}^{M}(x,\cdot)$ which satisfies $p_{k}^{M}(x,\mathrm{y}_1)>0$ for all $\mathrm{y}_1\in\SS_{k+1}^{1}(x)$. Given $y\in \SS_{k+1}$, we also set 
\be\label{eq:def-P_k_M}
P_k^{M}(x, y):=  \begin{cases}p_k^{M}(x, \mathrm{y}_1) \beta_{\mathrm{y}_2}(\ydos(k,x,\mathrm{y}_1)) & \mbox{if $y \in \SS_{k+1}(x)$}
,\\[1mm]
0 &  \mbox{if $y \in \SS_{k+1} \setminus \SS_{k+1}(x)$}.
\end{cases}
\ee

Letting $E(x)=\left\{y\in\cR^d\;|\;|x-y|\leq \Delta x/2\right\}$ for all $x\in \G$, 
we define ${\bf br}(M)$ as the solution to
\be
\label{def-br}
\ba{rcl}
\widehat{M}_{k+1}(y)&=&\sum\limits_{x\in\SS_{k}}P_k^{M}(x,y)\widehat{M}_{k}(x)\quad\text{for all }k\in \I^*,\,y\in \SS_{k+1},\\ [13pt]
\hspace{0.34cm}\widehat{M}_0(x)&=&m_0^{*}(E(x))\quad\text{for all } x\in \SS_{0}.
\ea
\ee

The discretization of Problem \ref{mfg_problem} that we consider in this work reads as follows.
\begin{problem}
\label{MFG-SL_discrete_scheme_n-degenerate}  
Find $M\in \M$ such that $M={\bf br}(M)$.
\end{problem}

We have the following result. 
\begin{theorem} Assume that~\ref{h:h1},~\ref{h:h2}, and~\ref{h:h3} hold. Then Problem~\ref{MFG-SL_discrete_scheme_n-degenerate} admits at least one solution. In addition, if~\ref{h:h4} holds then the solution is unique.
\end{theorem}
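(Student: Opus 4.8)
The plan is to establish existence via a fixed-point argument applied to the best response mapping $\mathbf{br}$, and then to derive uniqueness from the monotonicity assumption~\ref{h:h4} together with the entropy regularization. For existence, the key observation is that, by Lemma~\ref{the_big_compact}, all the grids $\SS_{k}$ are contained in a fixed compact set $K$ that does not depend on the discretization parameters, so $\M=\prod_{k\in\I}\P(\SS_{k})$ is a nonempty, convex, compact subset of a finite-dimensional space (a product of simplices). The strategy is thus to verify that $\mathbf{br}\colon\M\to\M$ is continuous and then invoke Brouwer's fixed point theorem. Continuity of $\mathbf{br}$ follows by tracking the dependence on $M$ through the backward scheme~\eqref{eq:hjb_eq}: since the entropy term $\eps\mathcal{E}_{\SS^1_{k+1}(x)}$ is strictly convex, for each $k$ and $x$ the minimizer $p_k^{M}(x,\cdot)$ is the unique solution of a strictly convex program whose data ($\ell$, $I_{\SS^2_{k+1}(x,\mathrm{y}_1)}[V^M_{k+1}(\mathrm{y}_1,\cdot)]$, and the previously-computed values) depend continuously on $M$; hence $M\mapsto V^M_k$ and $M\mapsto p^M_k(x,\cdot)$ are continuous by a backward induction on $k$, using the continuity of $\ell$ and $g$ from~\ref{h:h1}. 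Consequently the transition kernels $P_k^{M}$ in~\eqref{eq:def-P_k_M} depend continuously on $M$ (here one uses that $\beta_{\mathrm{y}_2}$ is continuous and $\ydos(k,x,\cdot)$ is affine), and then the forward recursion~\eqref{def-br} shows that $M\mapsto\widehat M=\mathbf{br}(M)$ is continuous. Brouwer's theorem then yields a fixed point, i.e. a solution to Problem~\ref{MFG-SL_discrete_scheme_n-degenerate}.

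\textbf{Uniqueness.} For uniqueness under~\ref{h:h4}, the plan is to follow the by-now standard Lasry--Lions monotonicity argument adapted to the discrete setting, exactly as in~\cite{Gianatti-Silva-arxiv}. Suppose $M$ and $\tilde M$ are two solutions, with associated value functions $V^M$, $V^{\tilde M}$, optimal transition probabilities $p^M$, $p^{\tilde M}$, and the induced flows $M=\widehat M$, $\tilde M=\widehat{\tilde M}$. The idea is to test the difference of the two Hamilton--Jacobi schemes~\eqref{eq:hjb_eq} against the difference of the two flows, summing by parts in $k$ over $\I$. The boundary terms at $k=0$ vanish because both flows start from the same $\widehat M_0=m_0^{*}(E(\cdot))$, and the terminal terms produce $\int\big(g(x,M_{N_t})-g(x,\tilde M_{N_t})\big)\dd(M_{N_t}-\tilde M_{N_t})(x)\geq 0$ by monotonicity of $g$. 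The running terms split into (i) the coupling contributions $\sum_k\Delta t\int\big(f(t_k,x,M_k)-f(t_k,x,\tilde M_k)\big)\dd(M_k-\tilde M_k)(x)\geq 0$, again by monotonicity of $f$ from~\ref{h:h4_ii}, and (ii) a sum of nonpositive terms measuring the suboptimality of $p^{\tilde M}$ in the problem defining $V^M$ (and vice versa), which because of the \emph{strict} convexity of the entropy are bounded above by $-\,c\sum_k\!\int\!\big(\mathrm{KL}(p^M_k(x,\cdot)\,\|\,p^{\tilde M}_k(x,\cdot))+\mathrm{KL}(p^{\tilde M}_k(x,\cdot)\,\|\,p^M_k(x,\cdot))\big)\dd M_k(x)$ type expressions. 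Combining, the nonnegative monotone contributions must all vanish and the strictly-convex terms force $p^M_k(x,\cdot)=p^{\tilde M}_k(x,\cdot)$ for all $k$ and all $x$ in the support of the flow; propagating this equality forward through~\eqref{def-br} gives $M=\tilde M$.

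\textbf{Main obstacle.} Most of this is bookkeeping; the part requiring genuine care is the continuity of $\mathbf{br}$, and more precisely the continuity of $M\mapsto p^M_k(x,\cdot)$. The subtlety is that the sets $\SS^1_{k+1}(x)$ and $\SS^2_{k+1}(x,\mathrm{y}_1)$, as well as the grids $\SS_k$ themselves, are \emph{fixed} once $N_t,N_s$ are fixed (they depend only on the dynamics $A,B$ and the constant $C_{\mathrm{b}}$, not on $M$), so there is no combinatorial discontinuity coming from the supports changing with $M$ — this is the point that makes the argument work and should be stated explicitly. Granting that, continuity reduces to the fact that the $\mathrm{argmin}$ of a strictly convex function depending continuously on a parameter is continuous in that parameter, applied inductively along $k=N_t,N_t-1,\dots,0$; the only things to check are that $\ell$ and $g$ are evaluated at arguments ranging over a \emph{compact} set (guaranteed by Lemma~\ref{the_big_compact} and the bound $|\alpha(k,x,\mathrm{y}_1)|\le C_{\mathrm{b}}(1+|x|)$ built into $\SS^1_{k+1}(x)$), so their continuity is uniform there. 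I would also remark that, since everything here is a finite-dimensional, parameter-fixed analogue of the construction in~\cite[Section 5]{Gianatti-Silva-arxiv}, the cleanest write-up simply invokes those results after noting that assumptions~\ref{h:h1}--\ref{h:h3} are exactly what is needed for them to apply in the present polynomial-growth setting, the compactness of the grids being supplied by Lemma~\ref{the_big_compact}.
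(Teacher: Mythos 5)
Your proposal follows essentially the same route as the paper: existence via continuity of $\mathbf{br}$ (which rests on the uniqueness of the entropy-regularized minimizers $p_k^{M}(x,\cdot)$ and the fact that the grids $\SS_k$, $\SS^1_{k+1}(x)$, $\SS^2_{k+1}(x,\mathrm{y}_1)$ do not depend on $M$) combined with Brouwer's fixed point theorem on the compact convex set $\M$, and uniqueness via the discrete Lasry--Lions monotonicity argument of~\cite[Proposition~4.2]{Gianatti-Silva-arxiv}, whose key ingredient --- that $\widehat M_k(x)>0$ for all $x\in\SS_k$, so the strict-convexity terms force the kernels to coincide everywhere --- you correctly identify. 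The write-up is correct and consistent with the paper's (much terser) proof.
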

\begin{proof}
Since, for every $M\in\M$, $k\in\I^{*}$, and $x\in\SS_{k}$ we have that $p_k^{M}(x,\cdot)$ is unique, it is easy to check that ${\bf br}$ is continuous. In turn, the existence of a fixed point of ${\bf br}$ follows from Brouwer's fixed point theorem. The uniqueness result follows from the arguments in the proof of \cite[Proposition~4.2]{Gianatti-Silva-arxiv}, the key point being that, if $\widehat{M}={\bf br}(M)$, then $\widehat{M}_k(x)>0$ for all $k\in\I$ and $x\in\SS_k$. 
\end{proof}

Now, let us discuss the convergence of solutions to~\eqref{MFG-SL_discrete_scheme_n-degenerate} towards a solution to Problem~\ref{mfg_problem} as the discretization parameters $\Delta t$, $\Delta x$, and $\eps$ tend to zero. Let $(N_t^n)_{n\in\NN}\subset\NN$, $(N_s^n)_{n\in\NN}\subset\NN$, $(\eps_n)_{n\in\NN}\subset ]0,\infty[$,  and, for every $n\in\NN$, set $\Delta t_n= T/N_{t}^{n}$, $\Delta x_n= 1/N_{s}^{n}$, $\I^n=\{0,\hdots, N_t^n\}$, $\I^{n,*}:=\I^n\setminus \{N_{t}^{n}\}$,  $t^n_k=k\Delta t_n\;(k\in \I^n)$, and $\G^n=\{i\Delta x_n \, | \, i\in\ZZ^d\}$.  We assume that $N_s^{n}\geq N_t^{n}$. For $k\in \I^{n,*}$ and $x\in \G^n$, we denote by $\SS_{k+1}^{1,n}(x)$, $\SS^{2,n}_{k+1}(x,\mathrm{y}_1)$ ($\mathrm{y}_1\in \SS^{1,n}_{k+1}(x)$),  and $\SS_{k+1}^n(x)$ the sets defined in \eqref{definicion_de_los_S} associated with $\Delta t_n$ and $\Delta x_n$. For $k\in \I^n$, the set $\SS^n_{k}$ is defined as in \eqref{eq:family_s}.  Denote by $\Gamma^n$ the set of continuous functions $\gamma\colon [0,T]\to \cR^d$ such that for each $k \in \I^n$, 
$\gamma(t^n_k)\in \SS_{k}^n$ and, for every $k\in\I^{n,*}$,  the restriction of $\gamma$ to the interval $[t^n_{k}, t^n_{k+1}] $ is affine. Finally, let $M^{n}\in\M$ be a solution to Problem~\ref{MFG-SL_discrete_scheme_n-degenerate} associated with the previous parameters and, recalling \eqref{eq:def-P_k_M}, let us define $\xi^n\in \P(\Gamma)$ as
\be 
\label{eq:def-xi-n}
 \xi^n= \sum_{\gamma \in \Gamma^{n}} M_0^n(\gamma(0))P^n(\gamma)\delta_{\gamma} \in \P(\Gamma), \quad \text{where} \quad P^{n}(\gamma):=  \prod_{k=0}^{N^n_t-1}P^{M^n}_k(\gamma(t^n_k), \gamma(t^n_{k+1})). 
\ee

We extend $M^n$ to the element in $C([0,T];\P_{1}(\RR^{d}))$ defined by
\be
\label{rem:def-Mn-cont}
[0,T] \ni t \mapsto M^n(t):=e_t\sharp \xi^n \in  \P_1(\RR^d). 
\ee
\begin{lemma} 
\label{lem:compacidad}
Assume that~\ref{h:h1},~\ref{h:h2}, and~~\ref{h:h3} are in force. Then the following hold:
\begin{enumerate}[label={\rm(\roman*)}]
\item 
\label{lem:compacidad_i}
The family $\xi^{n}$ has at least one accumulation point in $\P(\Gamma)$.
\item 
\label{lem:compacidad_ii}
The family $M^{n}$ has at least one accumulation point in $C([0,T];\P_{1}(\RR^{d}))$.
\end{enumerate}
\end{lemma}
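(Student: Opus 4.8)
The plan is to establish tightness of both families and then invoke Prokhorov's theorem. For part~\ref{lem:compacidad_i}, I would first use Lemma~\ref{the_big_compact}: there is a compact set $K\subset\RR^{d}$, independent of $n$, with $\SS_{k}^{n}\subset K$ for all $k\in\I^{n}$ and all $n$. Since every $\gamma\in\Gamma^{n}$ is piecewise affine with nodes $\gamma(t_{k}^{n})\in\SS_{k}^{n}\subset K$, each such $\gamma$ takes values in the convex hull of $K$, which is compact. Moreover, the discrete dynamics underlying the construction of $\SS_{k+1}^{n}$ (via \eqref{definicion_de_los_S}, \eqref{eq:family_s}, and the bound $|a|\leq C_{{\rm b}}(1+|x|)$ with $x\in K$) forces the increments to satisfy $|\gamma(t_{k+1}^{n})-\gamma(t_{k}^{n})|\leq \Delta t_{n}\big[(C_{A}+C_{B}C_{{\rm b}})(1+\sup_{x\in K}|x|)\big]+C_{I}\Delta x_{n}$, so that, since $\Delta x_{n}\leq \Delta t_{n}$, the functions in $\Gamma^{n}$ are uniformly Lipschitz on $[0,T]$ with a constant $L_{K}$ independent of $n$. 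Thus $\bigcup_{n}\supp(\xi^{n})\subset\Gamma^{n}\subset \mathcal{K}$, where $\mathcal{K}=\{\gamma\in\Gamma\,|\,\gamma([0,T])\subset\mathrm{co}(K),\ \gamma\text{ is }L_{K}\text{-Lipschitz}\}$ is, by the Arzel\`a--Ascoli theorem, a compact subset of $\Gamma=C([0,T];\RR^{d})$. Hence $(\xi^{n})_{n}$ is a tight family of probability measures supported on a fixed compact set, and Prokhorov's theorem yields a subsequence converging narrowly to some $\xi^{*}\in\P(\Gamma)$; since the supports are contained in the fixed compact $\mathcal{K}$, one also has $\xi^{*}\in\P_{1}(\Gamma)$ and convergence in $d_{1}$.

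For part~\ref{lem:compacidad_ii}, I would deduce the compactness of $(M^{n})_{n}$ in $C([0,T];\P_{1}(\RR^{d}))$ from that of $(\xi^{n})_{n}$ via the continuity of the evaluation maps. Recall $M^{n}(t)=e_{t}\sharp\xi^{n}$. First, since $\supp(\xi^{n})\subset\mathcal{K}$, every $M^{n}(t)$ is supported in the fixed compact $\mathrm{co}(K)$, so $\{M^{n}(t)\,|\,n\in\NN,\ t\in[0,T]\}$ is a tight (hence precompact in $d_{1}$) subset of $\P_{1}(\RR^{d})$. Second, for the equicontinuity in $t$: for $s,t\in[0,T]$ the coupling $(e_{s},e_{t})\sharp\xi^{n}\in\Pi(M^{n}(s),M^{n}(t))$ gives
\be
d_{1}(M^{n}(s),M^{n}(t))\leq\int_{\Gamma}|\gamma(s)-\gamma(t)|\dd\xi^{n}(\gamma)\leq L_{K}|s-t|,
\ee
using the uniform Lipschitz bound on $\supp(\xi^{n})$. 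Hence $(M^{n})_{n}$ is a family of uniformly $L_{K}$-Lipschitz curves in the compact metric space $(\{\mu\in\P_{1}(\RR^{d})\,|\,\supp(\mu)\subset\mathrm{co}(K)\},d_{1})$, and a second application of Arzel\`a--Ascoli produces a subsequence converging uniformly on $[0,T]$ to some $m^{*}\in C([0,T];\P_{1}(\RR^{d}))$. (Alternatively, along the subsequence from part~\ref{lem:compacidad_i} one checks directly that $M^{n}\to m^{*}$ with $m^{*}(t)=e_{t}\sharp\xi^{*}$, since $\mu\mapsto e_{t}\sharp\mu$ is continuous for the topology of narrow convergence on measures with uniformly compact support.)

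The only mildly delicate point is the uniform Lipschitz estimate on trajectories in $\Gamma^{n}$, which rests on two facts: that the admissible controls in the scheme are bounded by $C_{{\rm b}}(1+|x|)$ with $x$ ranging over the uniformly bounded grids $\SS_{k}^{n}\subset K$ (Remark~\ref{rem:reescritura_de_v} and Lemma~\ref{the_big_compact}), and that the extra displacement introduced by snapping $\Phi_{2}(k,x,a)$ into $\supp(\beta_{\mathrm{y}_{2}})$ is at most $C_{I}\Delta x_{n}\leq C_{I}\Delta t_{n}$ by \eqref{support_beta_x}; these are exactly the ingredients already used in the proof of Lemma~\ref{the_big_compact}. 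Everything else is a routine application of Prokhorov and Arzel\`a--Ascoli, and this argument is essentially the one carried out in~\cite[Lemma~5.5]{Gianatti-Silva-arxiv}, to which one can also refer for the details.
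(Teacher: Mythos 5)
Your proposal is correct and follows essentially the same route as the paper's proof: Lemma~\ref{the_big_compact} gives the uniform sup-norm bound on trajectories, the control bound $C_{{\rm b}}(1+|x|)$ together with \eqref{support_beta_x} and $\Delta x_{n}\leq\Delta t_{n}$ gives the uniform Lipschitz bound on $\supp(\xi^{n})$, and then Prokhorov yields \ref{lem:compacidad_i} while the coupling $(e_{s},e_{t})\sharp\xi^{n}$ plus Arzel\`a--Ascoli yields \ref{lem:compacidad_ii}. No gaps.
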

\begin{proof}
\ref{lem:compacidad_i}: Since $ \supp(\xi^n)\subset \Gamma^n$, it follows from Lemma~\ref{the_big_compact} that there exists $C_{\infty}>0$ such that
\be
\label{eq:cota_C_infty_trayectoria}
\|\gamma \|_{\infty} \leq C_{\infty}\quad\text{for all }\gamma\in \supp(\xi^n).
\ee
Moreover, if $\gamma \in \supp(\xi^n)$, then $\gamma$ is absolutely continuous with 
\be
\label{eq:expresion_Derivatives}
\dot{\gamma}(t)=  \frac{\gamma(t_{k+1}^n)-\gamma(t_{k}^n) }{\Delta t_n} \quad \text{for all }k\in \I^{n,*},\,t\in]t_{k}^n, t_{k+1}^n[.
\ee
Writing $\gamma(t_k^n) =\left(\gamma_1(t_k^n) ,\gamma_2(t_k^n) \right)\in \cR^r\times \cR^{d-r} $, the definition of $\SS_{k+1}^{1,n}(\gamma(t_{k}^{n}))$ and~\eqref{eq:cota_C_infty_trayectoria} yield
$$
\gamma_1(t_{k+1}^n) =\gamma_1(t_{k}^n)+ \Delta t_n \left[ A_1(t_k^n, \gamma(t_{k}^n)) + B_1(t_k^n)\alpha(k,\gamma(t_{k}^n),\gamma_1(t_{k+1}^n))  \right],
$$
with $\left|\alpha(k,\gamma(t_{k}^n),\gamma_1(t_{k+1}^n))\right| \leq C_{{\rm b}}(1 + C_\infty)$. Thus, using \eqref{eq:cota_A} we deduce that 
$$
\left| \frac{\gamma_1(t^n_{k+1})-\gamma_1(t^n_k)}{\Delta t_n} \right|\leq \left(C_A+ C_B C_{{\rm b}}\right)\left(1+C_\infty\right),
$$
and, by \eqref{definition_alpha_n} and \eqref{definicion_de_los_S}, we obtain
\be 
\label{eq:cota_infinito_derivada}
 \left| \frac{\gamma_2(t^n_{k+1})-\gamma_2(t^n_k)}{\Delta t_n} \right|\leq  C_I\frac{\Delta x_n}{\Delta t_n}+\left(C_A+ C_B C_{{\rm b}}\right)\left(1+C_\infty\right).
\ee
Since $\Delta x_{n}\leq \Delta t_{n}$, we deduce from~\eqref{eq:expresion_Derivatives} that there exists $D_\infty>0$ such that 
$$
\|\dot{\gamma} \|_{\infty}\leq D_{\infty}\quad\text{for all }\gamma\in \supp(\xi^n) 
$$
and hence $\supp(\xi^n)\subset\{ \gamma \in W^{1,\infty}([0,T]; \RR^d) \, | \, \| \gamma\|_{\infty} \leq C_{\infty}, \; \|\dot{\gamma}\|_{\infty} \leq D_{\infty} \}$, which is a compact subset of $(\Gamma,\|\cdot\|_{\infty})$. Thus, the result follows from Prokhorov's theorem (see e.g.~\cite[Theorem~5.1.3]{ambrosio2008gradient}).

\ref{lem:compacidad_ii}: By~\eqref{eq:cota_C_infty_trayectoria}, for every $t\in [0,T]$ and $n\in\NN$, we have
$$
\supp\left(M^n(t)\right)\subset \ov{\mathrm{B}}_\infty(0,C_{\infty}),
$$
and, by~\eqref{def:wasserstein_uno} and~\eqref{eq:cota_infinito_derivada}, 
$$
d_1\left(M^n(s), M^n(t)\right)\leq D_{\infty} |s-t|\quad\text{for all }s,\,t\in [0,T],\,n\in\NN.
$$
Since $\{\mu \in \P_1(\RR^d) \; | \; \supp(\mu) \subset  \ov{\mathrm{B}}_\infty(0,C_{\infty}) \}$ is compact in $\P_1(\RR^d)$ (see e.g. \cite[Proposition 7.1.5]{ambrosio2008gradient}), the result follows from the Arzel\`a-Ascoli theorem.
\end{proof}

Using the previous compactness result and arguing as in the proof of \cite[Theorem~5.1]{Gianatti-Silva-arxiv}, one obtains the following convergence result.
\begin{theorem}   
\label{main_result}
Assume that \ref{h:h1},~\ref{h:h2}, and ~\ref{h:h3}  hold and that, as $n\to \infty$, $N_t^{n} \to \infty$, $N^n_s \to \infty$, $   N^n_t/N^n_s \to 0$, and $\eps_n=o\left( 1/(N^n_t \log(N^n_s)) \right)$. Then there exists a solution $\xi^{*}$ to Problem~\ref{mfg_problem} such that, up to some subsequence, $\xi^{n}\to\xi^{*}$ narrowly in $\P(\Gamma)$ and $M^{n}\to m^*:= e_{(\cdot)}\sharp\xi^{*}$ in $C([0,T];\P_1(\RR^{d}))$.

In addition, if~\ref{h:h4} holds and for every $m\in C([0,T];\P_{1}(\RR^d))$ and $m_{0}^{*}$-a.e. $x\in\RR^d$ problem~\eqref{oc_problem} admits a unique solution, then the whole sequence $(\xi^{n})_{n\in\NN}$ converges narrowly towards the unique solution to Problem~\ref{mfg_problem}.
\end{theorem}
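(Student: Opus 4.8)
The plan is to adapt the scheme of the proof of~\cite[Theorem~5.1]{Gianatti-Silva-arxiv} to the present setting, the new features being the entropic penalization in~\eqref{eq:hjb_eq} and the polynomial growth of the data. I would first use Lemma~\ref{lem:compacidad} to pass to a subsequence (not relabelled) along which $\xi^{n}\to\xi^{*}$ narrowly in $\P(\Gamma)$ and $M^{n}\to m^{*}$ in $C([0,T];\P_{1}(\RR^{d}))$. Consistency of the time marginals, $e_{t}\sharp\xi^{*}=m^{*}(t)$ for every $t\in[0,T]$, follows from the continuity of the evaluation maps $e_{t}$ on $(\Gamma,\|\cdot\|_{\infty})$ and from~\eqref{rem:def-Mn-cont}; for $t=0$ one moreover uses that, as $\Delta x_{n}\to0$, the measures $\widehat M^{n}_{0}=\sum_{x\in\SS^{n}_{0}}m_{0}^{*}(E(x))\delta_{x}$ converge narrowly to $m_{0}^{*}$, so that $e_{0}\sharp\xi^{*}=m_{0}^{*}$ and $\xi^{*}\in\P_{m_{0}^{*}}(\Gamma)$. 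By Lemma~\ref{the_big_compact} the $\xi^{n}$ are supported in a fixed compact set of curves bounded in $W^{1,\infty}$; as this set is closed in $\Gamma$, so is $\xi^{*}$.

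Next I would derive a value identity for the scheme: multiplying the Bellman recursion~\eqref{eq:hjb_eq} by $\widehat M^{n}_{k}(x)$, summing over $x\in\SS^{n}_{k}$, using~\eqref{eq:def-P_k_M}, the definition~\eqref{def-br} of ${\bf br}$ and $\widehat M^{n}=M^{n}$, and telescoping over $k$, one obtains
\[
\sum_{x\in\SS^{n}_{0}}V^{M^{n}}_{0}(x)\,\widehat M^{n}_{0}(x)=\int_{\Gamma}C^{n}(\gamma)\,\dd\xi^{n}(\gamma)+R_{n},
\]
where $C^{n}(\gamma)=\sum_{k=0}^{N^{n}_{t}-1}\Delta t_{n}\,\ell\big(t^{n}_{k},\alpha(k,\gamma(t^{n}_{k}),\gamma_{1}(t^{n}_{k+1})),\gamma(t^{n}_{k}),M^{n}(t^{n}_{k})\big)+g(\gamma(T),M^{n}(T))$ is the discrete cost along $\gamma$, with the control recovered as in~\eqref{definition_alpha_n}, and $R_{n}$ is the accumulated entropy $\sum_{k}\eps_{n}\sum_{x}\widehat M^{n}_{k}(x)\,\mathcal E_{\SS^{1,n}_{k+1}(x)}(p^{M^{n}}_{k}(x,\cdot))$. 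Since $-\log\#F\le\mathcal E_{F}\le0$ and, by Lemma~\ref{the_big_compact}, $\#\SS^{1,n}_{k+1}(x)\le C(N^{n}_{s})^{r}$ uniformly in $n$, one has $|R_{n}|\le C\,\eps_{n}N^{n}_{t}\log N^{n}_{s}\to0$ by hypothesis; the same estimate, together with the sup--norm stability of the schemes, shows that $V^{M^{n}}$ and the solution $\V^{n}$ of~\eqref{fully_discreto_HJB} differ by $o(1)$ on the grids, so Proposition~\ref{prop:convergencia-value-function-discrete-continuous} (applied with $m_{n}=M^{n}$) yields $\sup_{k,x}|V^{M^{n}}_{k}(x)-v[m^{*}](t^{n}_{k},x)|\to0$. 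Combined with $\widehat M^{n}_{0}\to m_{0}^{*}$ and the continuity of $v[m^{*}](0,\cdot)$ (Proposition~\ref{prop:value_function}), this gives $\int_{\Gamma}C^{n}(\gamma)\,\dd\xi^{n}(\gamma)\to\int_{\RR^{d}}v[m^{*}](0,x)\,\dd m_{0}^{*}(x)=\int_{\Gamma}v[m^{*}](0,\gamma(0))\,\dd\xi^{*}(\gamma)$.

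The core step --- and the one I expect to be the main obstacle --- is to identify $\xi^{*}$ as an equilibrium, i.e. to pass to the limit in $\int_{\Gamma}C^{n}\,\dd\xi^{n}$ and recognise the continuous cost. For $\gamma\in\supp\xi^{n}$ the recovered controls $\alpha^{n}_{\gamma}$ (piecewise constant, equal to $\alpha(k,\gamma(t^{n}_{k}),\gamma_{1}(t^{n}_{k+1}))$ on $]t^{n}_{k},t^{n}_{k+1}[$) are bounded in $L^{\infty}$ uniformly in $n$ and $\gamma$ by~\eqref{definicion_de_los_S} and Lemma~\ref{the_big_compact}, and $\gamma$ is a perturbed Euler curve for $\dot y=A(s,y)+B(s)\alpha^{n}_{\gamma}(s)$ with per-step defect $O(\Delta x_{n})$ coming from~\eqref{support_beta_x}, so by discrete Grönwall $\|\gamma-\gamma^{0,\gamma(0),\alpha^{n}_{\gamma}}\|_{\infty}\le C(\Delta t_{n}+N^{n}_{t}\Delta x_{n})\to0$ uniformly; with the uniform continuity of $\ell,g$ on the relevant compacts and $M^{n}\to m^{*}$ in $C([0,T];\P_{1})$, this yields $C^{n}(\gamma)\ge J^{0,\gamma(0)}[m^{*}](\alpha^{n}_{\gamma})-\rho_{n}$ with $\rho_{n}\to0$ uniformly in $\gamma$. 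Then, by a compactness argument for the pairs $(\gamma,\alpha^{n}_{\gamma})$ along $\xi^{n}\to\xi^{*}$ (exploiting the uniform $L^{\infty}$ bound on the controls and the uniform $W^{1,\infty}$ bound on the curves, in the spirit of a Skorokhod representation), one passes to the limit in the state equation to see that $\xi^{*}$ is concentrated on genuine pairs $(\gamma^{*},\alpha^{*})$ with $\dot\gamma^{*}=A(s,\gamma^{*})+B(s)\alpha^{*}$ a.e., and uses the weak lower semicontinuity of the running cost (exactly as in the proof of Proposition~\ref{prop:value_function}, cf.~\cite[Theorem~3.23]{dacorogna89}) and the continuity of $g$ to obtain
\[
\int_{\Gamma}J^{0,\gamma(0)}[m^{*}](\alpha^{*}_{\gamma})\,\dd\xi^{*}(\gamma)\le\liminf_{n\to\infty}\int_{\Gamma}C^{n}(\gamma)\,\dd\xi^{n}(\gamma)=\int_{\Gamma}v[m^{*}](0,\gamma(0))\,\dd\xi^{*}(\gamma),
\]
where $\alpha^{*}_{\gamma}$ denotes the (a.e. unique) control associated with $\gamma$. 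Since $J^{0,\gamma(0)}[m^{*}](\alpha^{*}_{\gamma})\ge v[m^{*}](0,\gamma(0))$ by definition of the value function, the nonnegative integrand must vanish $\xi^{*}$-a.e., i.e. for $\xi^{*}$-a.e. $\gamma^{*}$ the pair $(\gamma^{*},\alpha^{*}_{\gamma^{*}})$ solves~\eqref{oc_problem} with $x=\gamma^{*}(0)$ and $m=e_{(\cdot)}\sharp\xi^{*}$; together with the first step this shows $\xi^{*}$ solves Problem~\ref{mfg_problem} and that $M^{n}\to m^{*}=e_{(\cdot)}\sharp\xi^{*}$. The delicate point is precisely the concentration of $\xi^{*}$ on trajectory--control pairs together with the lower semicontinuity of the cost; the telescoping identity and the control of the entropic remainder are comparatively routine, and the convergence of the discrete value functions is already supplied by Proposition~\ref{prop:convergencia-value-function-discrete-continuous}.

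For the last assertion, if~\ref{h:h4} holds and~\eqref{oc_problem} has a unique solution for every admissible $m$ and $m_{0}^{*}$-a.e. $x$, then by Theorem~\ref{th:mfg_existence_uniqueness} Problem~\ref{mfg_problem} has a unique solution $\xi^{*}$; since the preceding argument shows that every subsequence of $(\xi^{n})$ has a further subsequence converging narrowly to a solution of Problem~\ref{mfg_problem}, hence to $\xi^{*}$, and the $\xi^{n}$ lie in a fixed narrowly compact subset of $\P(\Gamma)$, the whole sequence converges narrowly to $\xi^{*}$, and correspondingly $M^{n}\to e_{(\cdot)}\sharp\xi^{*}$ in $C([0,T];\P_{1}(\RR^{d}))$.
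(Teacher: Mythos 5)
Your proposal is correct and follows essentially the same route as the paper, which proves this theorem by invoking the compactness of Lemma~\ref{lem:compacidad} and then arguing as in the proof of \cite[Theorem~5.1]{Gianatti-Silva-arxiv} (telescoping the dynamic programming recursion, controlling the accumulated entropy via the hypothesis $\eps_n=o\big(1/(N^n_t\log N^n_s)\big)$, using Proposition~\ref{prop:convergencia-value-function-discrete-continuous} for the value functions, and concluding by lower semicontinuity of the cost and the uniqueness/subsequence argument). You have simply filled in, consistently with that reference, the details that the paper leaves to the citation.
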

\section{Numerical results}
\label{sec:numerical_result}

In this section we implement our numerical method in two examples. For computational simplicity, we consider here one-dimensional problems, i.e. $d=1$, and dynamics~\eqref{eq:state_equation} having the form $\dot{\gamma}=\alpha$. We refer the reader to \cite[Example 2]{Gianatti-Silva-arxiv} for the implementation of the scheme in a two-dimensional example, where a typical agent controls its acceleration and the cost functional satisfies~\ref{h:h1}.  We focus our attention on cost functionals satisfying~\ref{h:h1} and~\ref{h:h4}, with $\ell_0(t,a,x)$ having
polynomial growth on $(a,x)$, and $f$ and $g$ begin given by 
\be 
\ba{rcl}
\ds f(t,x,\mu) &=& \ds \theta_1 (\rho_\sigma\star \mu)(x) \quad \text{for all } (t,x,\mu)\in [0,T]\times\RR\times \P_1(\RR), \\[6pt] 
\ds g(x,\mu)&=&g_0(x) + \theta_2 (\rho_\sigma\star \mu)(x) \quad \text{for all } (x,\mu)\in\RR\times \P_1(\RR), 
\ea
\ee
where $\theta_{1}$, $\theta_{2}\in [0,\infty[$,  $\sigma\in ]0,\infty[$, $g_{0}\colon\RR\to\RR$ satisfies~\ref{h:h1}, and
\be 
\label{rho_sigma}
\rho_\sigma(x):= \frac{1}{\sqrt{2\pi}\sigma}e^{-x^2/2\sigma^2} \quad \text{for all }x\in\RR.
\ee
Notice that the convolution terms in $f$ and $g$, which model the aversion of a typical player to crowded areas, satisfy the monotonicity condition in~\ref{h:h4}\ref{h:h4_ii}.

Let $(\Delta t,\Delta x)\in ]0,\infty[^2$ and $\eps>0$. Under the assumptions above, the finite MFG Problem~\ref{MFG-SL_discrete_scheme_n-degenerate} associated with these parameters admits a unique solution $M^{*}\in\M$. In order to approximate $M^{*}$, we consider the {\it fictitious play} sequence 
$$
\ov{\mathsf{M}}^0\in \M \; \; \text{arbitrary}, \;  \quad (\forall\,n\geq 1) \quad \mathsf{M}^{n+1}= {\bf br}(\ov{\mathsf{M}}^{n}), \quad \ov{\mathsf{M}}^{n+1}= \frac{n}{n+1}\ov{\mathsf{M}}^{n} + \frac{1}{n+1}\mathsf{M}^{n+1},
$$
which, by \cite[Theorem~3.2]{MR4030259}, satisfies 
$(\mathsf{M}^{n}, \ov{\mathsf{M}}^{n}) \underset{n\to\infty}{\longrightarrow} (M^*, M^*)$. In the tests below, setting
$$
|{\bf br}(\ov{\mathsf{M}}^n)-\ov{\mathsf{M}}^n|_{L^1}:= \frac{1}{N_t+1} \sum_{k=0}^{N_t}\sum_{x\in \mathcal{S}_{k}} |{\bf br}(\ov{\mathsf{M}}^n)_{k}(x)-\ov{\mathsf{M}}_k^n(x)|
$$
and given a tolerance parameter $\delta>0$, we implement the following  fictitious play algorithm:
\vspace{0.5cm}

\begin{algorithm}[H]
\label{alg:1}
\SetAlgoLined
\KwData{ $\mathsf{M}^0\in\M$, $\delta>0$}
$e\leftarrow \delta +1$\\[3pt]
$n\leftarrow 1$\\[3pt]
$\ov{\mathsf{M}}^1\leftarrow \mathsf{M}^0$

\While{$e>\delta$}{\vspace{0.2cm}
$\mathsf{M}^{n+1}= {\bf br}(\ov{\mathsf{M}}^n)$\\[3pt]
$e\leftarrow  |\mathsf{M}^{n+1}-\ov{\mathsf{M}}^n|_{L^1}$\\[3pt]
$\ov{\mathsf{M}}^{n+1}= \frac{n}{n+1}\ov{\mathsf{M}}^n+\frac{1}{n+1}\mathsf{M}^{n+1}$\\[3pt]
$n\leftarrow n+1$ 
}
\Return{$\ov{\mathsf{M}}^{n-1}$}
 \label{alg:FP}
\end{algorithm}
\vspace{0.5cm}

In both examples below, we consider a time horizon $T=1$, $\sigma = 0.07$, and 
$$
\Delta t = 1/30, \quad \Delta x = 1/150,\quad \text{and}\quad \eps = 0.002.
$$

Given an initial distribution $m_0^{*}\in\P_1(\RR)$, we initialize the fictitious play algorithm by defining $\mathsf{M}^0\in\M$ with constant time marginals given by $\mathsf{M}^0_k=M_0$, for $k=1, \dots, 30$, where $M_0$ is obtained by discretizing the initial distribution $m_0^{*}$ according to \eqref{def-br}. As it was mentioned above, the algorithm converges for an arbitrary initial condition $\mathsf{M}^0\in\M$. However, since the term $\mathsf{M}^0/n$ is involved in the computation of $(\ov{\mathsf{M}}^{n}, \mathsf{M}^{n+1})$ the convergence of the algorithm could be slow. To accelerate the method we can update the initial condition when some tolerance is achieved, i.e. we use as the initial distribution the approximated equilibrium $\ov{\mathsf{M}}^{n-1}$ obtained for a given tolerance parameter $\delta>0$, and then we run the algorithm for a smaller tolerance parameter. In our tests, we update the initial condition twice, taking the tolerance parameters $\delta_1=0.1$ and $\delta_2=0.01$. We stop the algorithm when the tolerance $\delta_3=0.001$ is reached.  

\subsection{Example 1} 
We consider an absolutely continuous initial distribution $m_0^{*}\in\P_1(\RR)$ given by
$$
\dd m_0^{*}(x)= \mathbb{I}_{[-1,1]}(x)\frac{e^{-x^2/0.04}}{\int_{-1}^1 e^{-y^2/0.04}\dd y}\dd x\quad\text{for all }x\in\RR,
$$
where $\mathbb{I}_{[-1,1]}(x)=1$ if $x\in[-1,1]$ and $\mathbb{I}_{[-1,1]}(x)=0$, otherwise.  Given $\zeta_1$, $\zeta_2\in\RR$, we define
$$
\ell(t,a,x,\mu)=\frac{|a|^4}{4}+\zeta_1 |x-0.4|^2 |x+0.7|^2+\theta_1 f(x,\mu) \quad \text{and}\quad g(x,\mu)=\zeta_2 |x-0.4|^2 |x+0.7|^2+\theta_2 f(x,\mu).
$$
Notice that the functions $\ell$ and $g$ satisfy \ref{h:h1} for $p=4$. We run our algorithm for different values of $(\zeta_1,\zeta_2,\theta_1,\theta_2)$. In Figure~\ref{fig:example1} we show the returned distributions for the smallest tolerance parameter $\delta_3$. 
In Table~\ref{tabla1}, we provide the number of iterations needed for attaining the tolerances $\delta_1$, $\delta_2$, and $\delta_3$.

\vspace{-0.1cm}

\begin{table}[h]
\begin{center}
\vspace{0.1cm}
\begin{tabular}{|c|c|c|c|}
	\hline
\;\;$(\zeta_1,\zeta_2,\theta_1,\theta_2)$\;\; &\;\; $\delta_1=0.1$\;\; & \;\;$\delta_2=0.01$ \;\;&\;\; $\delta_3=0.001$\;\;\\[2pt]
	\hline

$(1,1,1,1)$ &$14$ & $10$ & $9$\\
$(1,1,1,5)$ & $10$& $12$ & $9$\\
$(5,1,1,1)$ & $18$ & $11$ & $9$\\
$(1,0,1,0)$ & $8$ & $7$ & $6$\\
\hline
\end{tabular}
\end{center}
\vspace{-0.2cm}
\caption{Number of iterations to obtain the desired accuracies.}
\label{tabla1}
\end{table}

\vspace{-0.2cm}

\begin{figure}[h]
\centering
\begin{tabular}{cccc}
\subfloat[ {\tiny $(\zeta_1,\zeta_2,\theta_1,\theta_2)=(1,1,1,1)$.}]{\includegraphics[width=.42\textwidth]{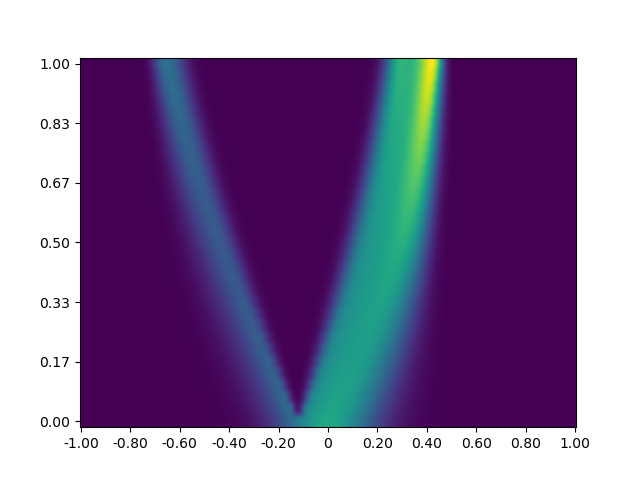}}& 
\subfloat[\tiny$(\zeta_1,\zeta_2,\theta_1,\theta_2)=(1,1,1,5)$.]{\includegraphics[width=.42\textwidth]{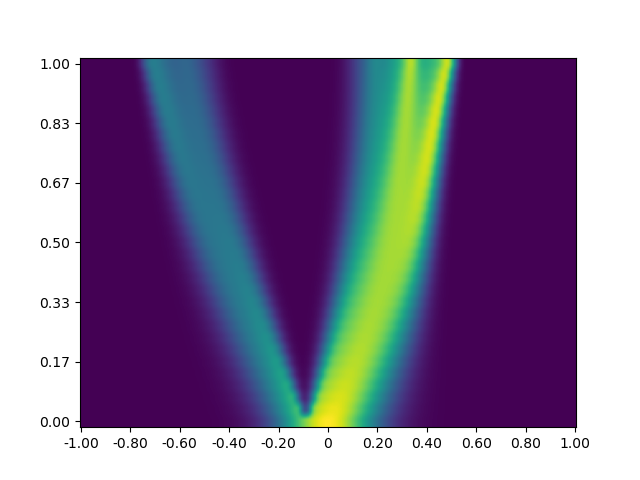}}\\ 
\subfloat[ \tiny$(\zeta_1,\zeta_2,\theta_1,\theta_2)=(5,1,1,1)$.]{\includegraphics[width=.42\textwidth]{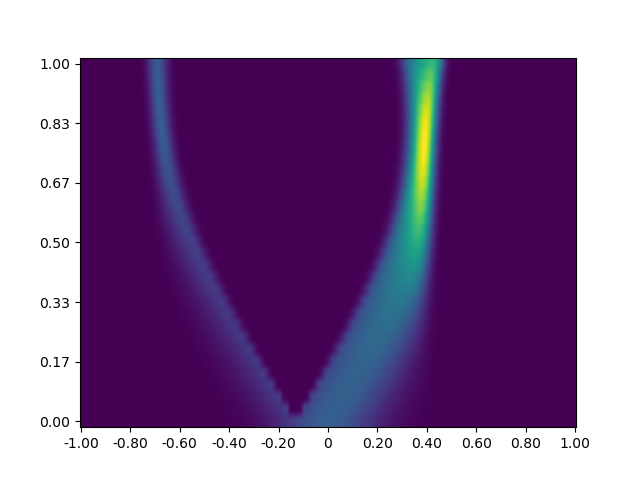}}& 
\subfloat[ \tiny$(\zeta_1,\zeta_2,\theta_1,\theta_2)=(1,0,1,0)$.]{\includegraphics[width=.42\textwidth]{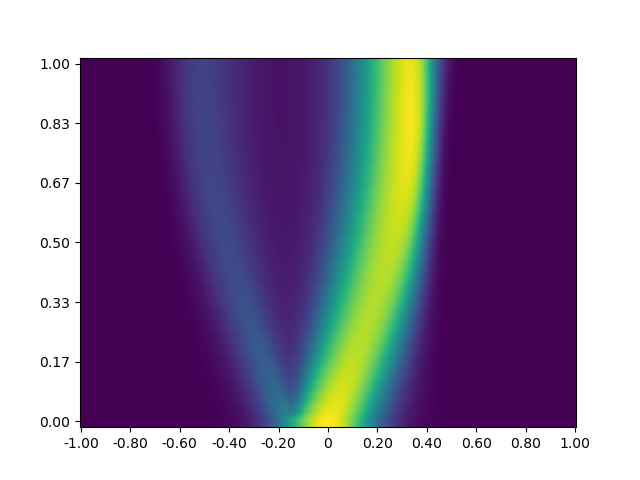}}
\end{tabular}
\vspace{-0.2cm}
\caption{Approximate equilibria for the tests  in Example 1}
\label{fig:example1}
\end{figure}

In this example, the initial distribution is concentrated around $x=0$. The cost functional penalizes the distance to the points $-0.7$ and $0.4$, large values of the speed, and incites a typical player to avoid crowded regions. Since $0.4$ is closer to zero, we see that most of the agents tend to concentrate around that point and, as the congestion term becomes more important, we see how the agents tend to separate from each other.

\subsection{Example 2}

The initial distribution $m_0^{*}\in\P_1(\RR)$ is given by
$$
\dd m_0^{*}(x)= \mathbb{I}_{[-1,1]}(x)\frac{e^{-(x-0.2)^2/0.01}+e^{-(x+0.2)^2/0.01}}{\int_{-1}^1 (e^{-(y-0.2)^2/0.01} +  e^{-(y+0.2)^2/0.01})\dd y}\dd x\quad\text{for all }x\in\RR.
$$
Given $\zeta_1$, $\zeta_2\in\RR$, we define
$$
\ell(t,a,x,\mu)=\frac{|a|^4}{4}+\zeta_1 |x-0.6|^2 |x+0.2|^2+\theta_1 f(x,\mu) \quad \text{and}\quad g(x,\mu)=\zeta_2 |x-0.6|^2 |x+0.2|^2+\theta_2 f(x,\mu).
$$
We consider the same parameters as those in Example 1 and we display in Figure~\ref{fig:example2} the distributions obtained for the smallest tolerance $\delta_3=0.001$. In Table~\ref{tabla2}, we show the number of iterations required to reach the tolerances $\delta_1$, $\delta_2$, and $\delta_3$.

\vspace{-0.2cm}

\begin{table}[h]
\begin{center}
\vspace{0.1cm}
\begin{tabular}{|c|c|c|c|}
	\hline
\;\;$(\zeta_1,\zeta_2,\theta_1,\theta_2)$\;\; &\;\; $\delta_1=0.1$\;\; & \;\;$\delta_2=0.01$ \;\;&\;\; $\delta_3=0.001$\;\;\\[2pt]
	\hline

$(1,1,1,1)$ &$7$ & $6$ & $6$\\
$(1,1,1,5)$ & $10$& $12$ & $9$\\
$(5,1,1,1)$ & $12$ & $10$ & $9$\\
$(1,0,1,0)$ & $4$ & $6$ & $6$\\
\hline
\end{tabular}
\end{center}
\vspace{-0.2cm} \caption{Number of iterations to obtain the desired accuracies.}
\label{tabla2}
\end{table}

\vspace{-0.2cm}

\begin{figure}[h]
\centering
\begin{tabular}{cccc}
\subfloat[\tiny $(\zeta_1,\zeta_2,\theta_1,\theta_2)=(1,1,1,1)$.]{\includegraphics[width=.42\textwidth]{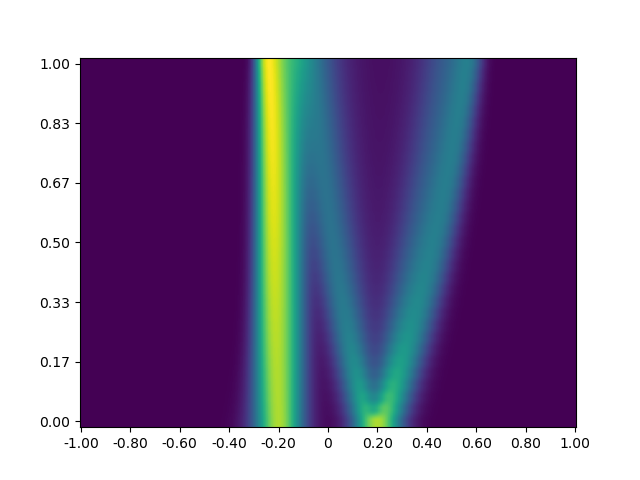}}& 
\subfloat[ \tiny $(\zeta_1,\zeta_2,\theta_1,\theta_2)=(1,1,1,5)$.]{\includegraphics[width=.42\textwidth]{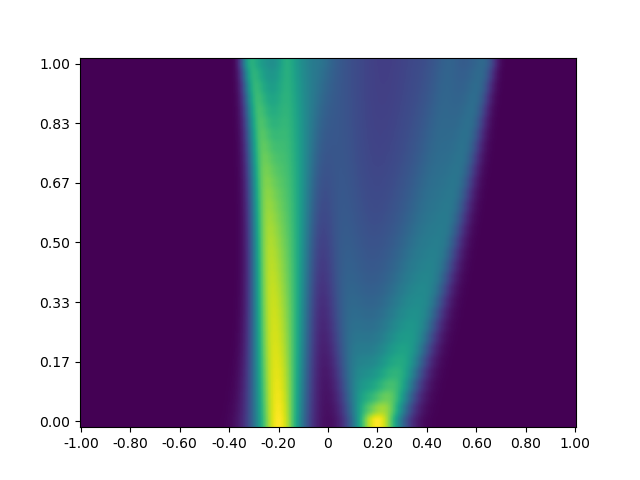}} \label{caso_B}\\ 
\subfloat[ \tiny $(\zeta_1,\zeta_2,\theta_1,\theta_2)=(5,1,1,1)$.]{\includegraphics[width=.42\textwidth]{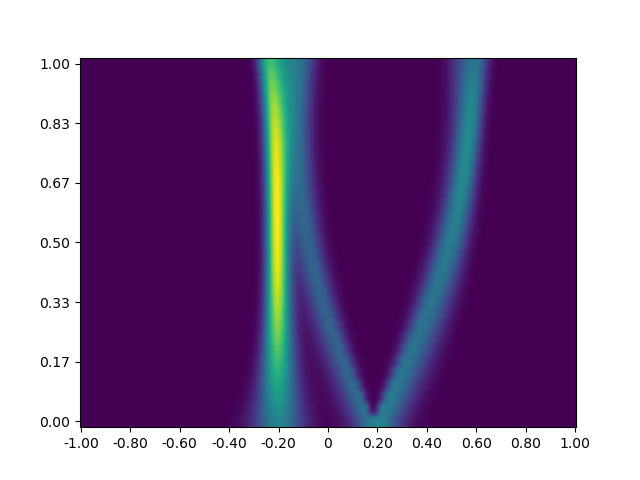}}& 
\subfloat[ \tiny $(\zeta_1,\zeta_2,\theta_1,\theta_2)=(1,0,1,0)$.]{\includegraphics[width=.42\textwidth]{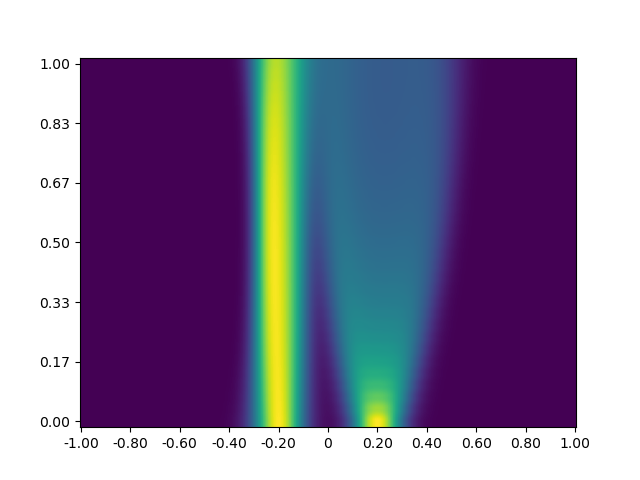}}
\end{tabular}
\vspace{-0.2cm}
\caption{Approximate equilibria for the tests  in Example 2}
\label{fig:example2}
\end{figure}

In this example, we start with a distribution concentrated around the points $-0.2$ and $0.2$. The cost functional penalizes the distance to the points $-0.2$ and $0.6$. Although these two points are symmetric with respect to $0.2$, we see that, in order to avoid crowded regions, more agents concentrated around $0.2$ at $t=0$ tend to go towards $0.6$ instead of $-0.2$. Once again we observe the impact of the congestion terms in the final distributions of the agents. 

For a better understanding of the fictitious play method, we end this section by displaying in Figure~\ref{fig:stepbystep} the first iterations of the algorithm when $(\zeta_1,\zeta_2,\theta_1,\theta_2)=(1,1,1,5)$. The final distribution in this case is shown in the top right corner of Figure~\ref{fig:example2}.

\begin{figure}[h]
\centering
\begin{tabular}{cccc}
\subfloat[ $\ov{\mathsf{M}}^1={\mathsf{M}}^0$.]{\includegraphics[width=.3\textwidth]{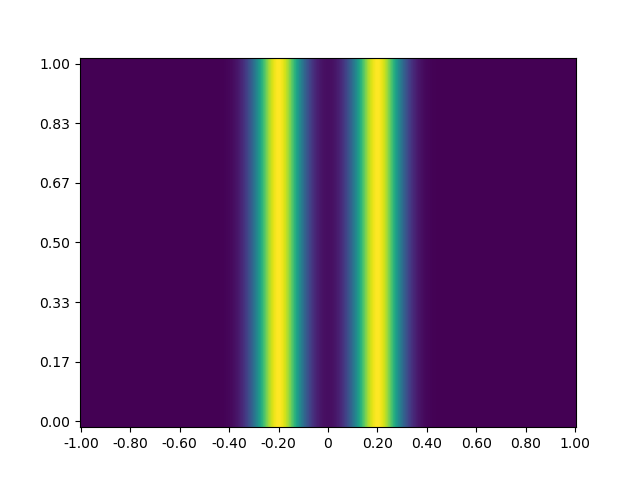}}& 
\subfloat[ ${\bf br}(\ov{\mathsf{M}}^1)$.]{\includegraphics[width=.3\textwidth]{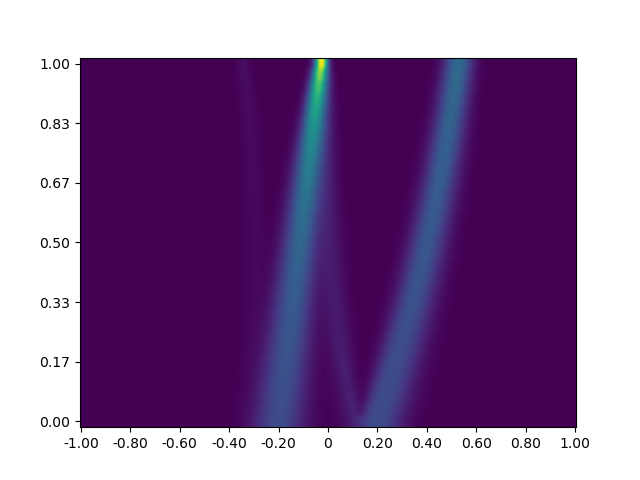}}\\ 
\subfloat[ $\ov{\mathsf{M}}^2$.]{\includegraphics[width=.3\textwidth]{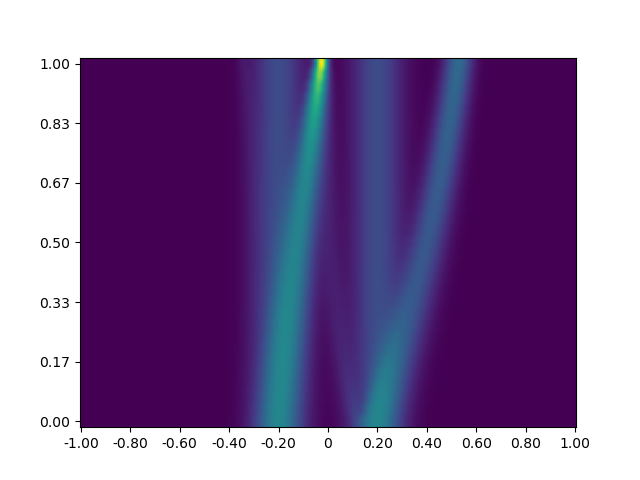}}& 
\subfloat[ ${\bf br}(\ov{\mathsf{M}}^2)$.]{\includegraphics[width=.3\textwidth]{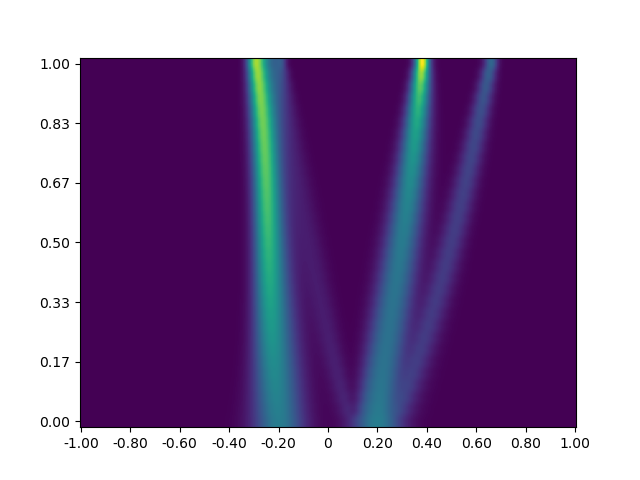}}\\
\subfloat[ $\ov{\mathsf{M}}^3$.]{\includegraphics[width=.3\textwidth]{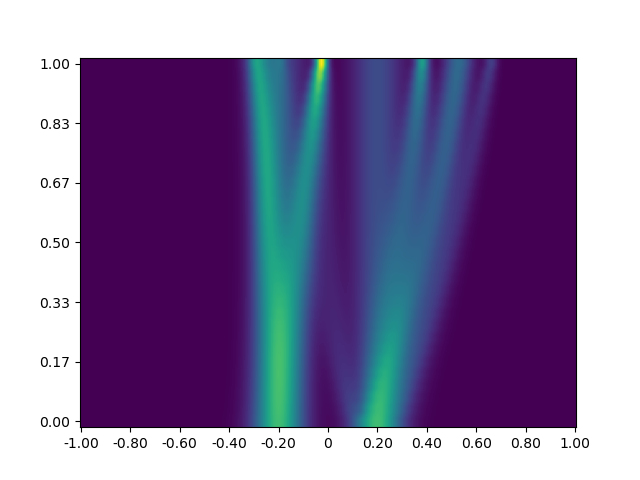}}& 
\subfloat[ ${\bf br}(\ov{\mathsf{M}}^3)$.]{\includegraphics[width=.3\textwidth]{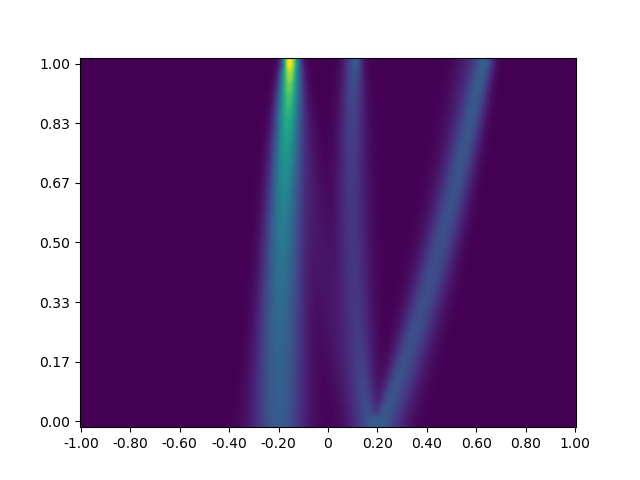}}\\
\subfloat[ $\ov{\mathsf{M}}^4$.]{\includegraphics[width=.3\textwidth]{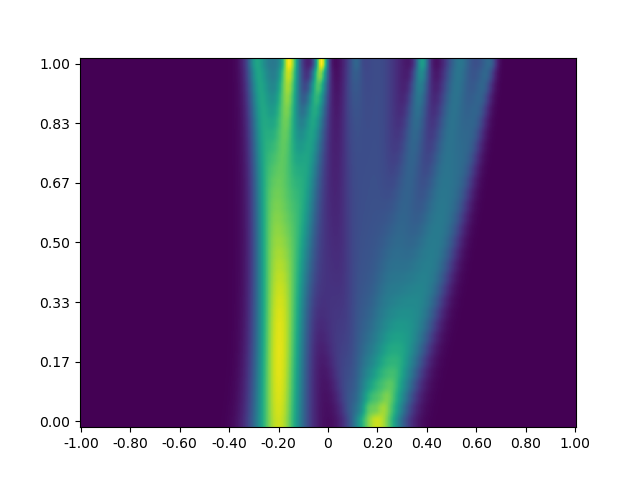}}& 
\subfloat[ ${\bf br}(\ov{\mathsf{M}}^4)$.]{\includegraphics[width=.3\textwidth]{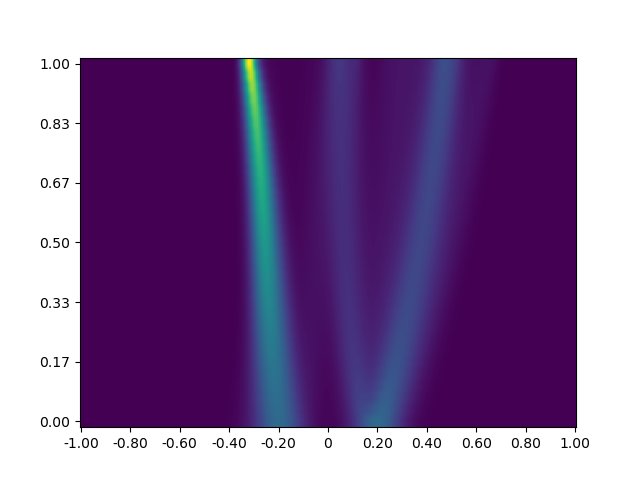}}
\end{tabular}
\caption{First iterations of the algorithm for $(\zeta_1,\zeta_2,\theta_1,\theta_2)=(1,1,1,5)$.}
\label{fig:stepbystep}
\end{figure} 
\bibliographystyle{plain}
\bibliography{hjb,mfg}
\end{document}